\newcommand{\EQ}{\begin{eqnarray}}
\newcommand{\EN}{\end{eqnarray}}
\newcommand{\EQQ}{\begin{eqnarray*}}
\newcommand{\ENN}{\end{eqnarray*}}
\newcommand{\nnum}{\nonumber}
\title{Linked Sections to ``On the Optimal Control of  Impulsive Hybrid Systems  On  Riemannian Manifolds"}
\author{Farzin Taringoo$^\ast$\hspace{-.5cm}\and Peter E. Caines\thanks{Department of Electrical and Computer Engineering and Centre for Intelligent
Machines, McGill University, Montreal, Canada, \{taringoo, peterc@cim.mcgill.ca\}.}}
\begin{document}
\maketitle
This paper contains material referenced in the content of the paper ``On the Optimal Control of  Impulsive Hybrid Systems  On  Riemannian Manifolds" submitted to the SIAM Journal on Control and Optimization on the 20th September 2012 and which is also to be found on math arxiv http://arxiv.org/abs/1209.4609. The contents of this manuscript consists of :
\begin{enumerate}
\item Interior Optimal Switching States,
\item Time Varying Switching Manifolds and Discontinuity of the Hamiltonian,
\item Time Varying Impulsive Jumps,
\item Interior Optimal Switching States, Time Varying Switching Manifolds and Impulsive Jumps,
\item Extension to Multiple Switchings Cases,
\end{enumerate}
where  Items  1 - 5 correspond to the referencing  link on page 2 of SIAM
Journal on Control and Optimization, Submission 086781.
\section{Interior Optimal Switching States}

Here we specify a hypothesis for MHOCP which expresses the HMP statement based on a differential form of the hybrid value function.\\\\
\textbf{\textit{A4}}: For an MHOCP, the value function $v(x,t),\hspace{.2cm} x\in \mathcal{M},t\in (t_{0},t_{f})$, is assumed to be differentiable at the optimal switching state $x^{o}(t^{-}_{s})$ in the switching manifold $\mathcal{S},$ where the optimal switching state is an interior point of the attainable switching states on the switching manifold.\\\\
We note that $\textbf{\textit{A4}}$ rules out MHOCPs derived from BHOCPs (see Lemma 3.2).
The following theorem gives the HMP statement for an accessible  MHOCP satisfying \textbf{\textit{A4}}. 
\begin{theorem}
\label{t20}
Consider an impulsive MHOCP satisfying \textbf{\textit{A1-A4}}. Then  corresponding to  the  optimal  control and optimal state trajectory $u^{o},x^{o}$ with a single switching state at $(x^{o}(t_{s}),t_{s})$, there exists a nontrivial adjoint trajectory $\lambda^{o}(.)=(x^{o}(.),p^{o}(.))\in T^{*}\mathcal{M}$ defined along the optimal state trajectory such that:
\EQ \hspace{.2cm}H_{q_{i}}(x^{o}(t),p^{o}(t),u^{o}(t))\leq H_{q_{i}}(x^{o}(t),p^{o}(t),u_{1}), \forall u_{1}\in U, t\in[t_{0},t_{f}],i=0,1, \EN
and the corresponding optimal adjoint variable $\lambda^{o}(.)\in T^{*}\mathcal{M}$ satisfies:
\EQ \dot{\lambda^{o}}(t)=\overrightarrow{H}_{q_{i}}(\lambda^{o}(t)),\quad t\in [t_{0},t_{f}], i=0,1.\EN
At the optimal switching state $x^{o}({t_{s}})$ and switching time $t_{s}$, we have 

\EQ\label{ad} &&\hspace{-.5cm}p^{o}(t^{-}_{s})=T^{*}\zeta(p^{o}(t_{s}))+\mu dv(x^{o}(t^{-}_{s}),t_{s}),\nnum\\&& \hspace{-.5cm} p^{o}(t^{-}_{s})\in T^{*}_{x^{o}(t^{-}_{s})}\mathcal{M},\quad p^{o}(t_{s})\in T^{*}_{x^{o}(t_{s})}\mathcal{M},\nnum\\&&\hspace{-.5cm} x^{o}(t_{s})=\zeta(x^{o}(t^{-}_{s})),\EN
where $\mu\in \mathds{R}, T^{*}\zeta:T^{*}\mathcal{M}\rightarrow T^{*}\mathcal{M},$
 and 
 \EQ dv(x^{o}(t^{-}_{s}),t_{s})=\sum^{n}_{j=1}\frac{\partial v(x^{o}(t^{-}_{s}),t_{s})}{\partial x^{j}}dx^{j}\in T^{*}_{x^{o}(t_{s})}\mathcal{M}. \EN
The continuity of the Hamiltonian at $(x^{o}({t_{s}}),t_{s})$ is given as follows
\EQ\hspace{-.5cm} H_{q_{0}}(x^{o}(t^{-}_{s}),p^{o}(t^{-}_{s}),u^{o}(t^{-}_{s}))=H_{q_{1}}(x^{o}(t_{s}),p^{o}(t_{s}),u^{o}(t_{s})).\EN
%\EQ dv(x^{o}(t^{-}_{s}),t_{s})=\Sigma^{n}_{j=1}\frac{\partial v(x^{o}(t^{-}_{s}),t_{s})}{\partial x^{j}}dx^{j}\in T^{*}_{x^{o}(t^{-}_{s})}\mathcal{M},\EN
%\EQ dg(x(t_{i}))=\Sigma^{n}_{j=1}\frac{\partial g(x(t_{i}))}{\partial x^{j}}dx^{j}\in T^{*}_{x^{o}(t^{-}_{s})}\mathcal{M},\EN
\end{theorem}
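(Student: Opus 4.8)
The plan is to derive the Hybrid Minimum Principle (HMP) for the impulsive MHOCP by a needle-variation argument adapted to the Riemannian setting, combined with a dynamic-programming linkage at the switching instant that exploits hypothesis \textbf{\textit{A4}}. I would first fix the optimal switching time $t_s$ and the optimal switching state $x^o(t^-_s)\in\mathcal{S}$, and decompose the total cost as the sum of the cost accrued on $[t_0,t_s]$ under dynamics $q_0$ plus the optimal cost-to-go from the jumped state $\zeta(x^o(t^-_s))$ at time $t_s$ under dynamics $q_1$; the latter is exactly $v(\zeta(x^o(t^-_s)),t_s)$. On each leg the classical Pontryagin construction on manifolds (lifting needle variations of $u$ through the variational/adjoint equations, so that $p^o$ solves $\dot\lambda^o=\overrightarrow{H}_{q_i}(\lambda^o)$ and the pointwise maximization $H_{q_i}(x^o,p^o,u^o)\le H_{q_i}(x^o,p^o,u_1)$ holds) is standard and I would invoke it as in the companion paper; the genuinely new content is the behavior at $t_s$.

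The core step is the transversality/linkage condition at the switch. Here I would argue that the optimal pre-switch state $x^o(t^-_s)$ is a stationary point, over the manifold $\mathcal{S}$, of the function $\xi\mapsto (\text{cost on }[t_0,t_s]\text{ reaching }\xi) + v(\zeta(\xi),t_s)$. Because $x^o(t^-_s)$ is an \emph{interior} point of the attainable switching states on $\mathcal{S}$ (by \textbf{\textit{A4}}) and $v$ is differentiable there, the first-order condition forces the gradient of the leg-one cost (which, by the adjoint construction, is represented by $p^o(t^-_s)$ up to the usual sign/normalization $\mu\in\mathds{R}$) to match $T^*\zeta(p^o(t_s)) + \mu\, dv(x^o(t^-_s),t_s)$ when restricted to $T_{x^o(t^-_s)}\mathcal{S}$; since $x^o(t^-_s)$ is interior and the value function is defined on all of $\mathcal{M}$ near it, differentiability of $v$ on $\mathcal{M}$ (not merely on $\mathcal{S}$) upgrades this to the full cotangent-space identity \eqref{ad}, with $x^o(t_s)=\zeta(x^o(t^-_s))$ holding by definition of the impulsive jump map.

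Finally, for the Hamiltonian continuity relation I would differentiate the composite value function with respect to the switching time $t_s$. Treating $t_s$ as a free optimization variable, optimality gives $\partial/\partial t_s$ of $\bigl[\text{(leg-one cost on }[t_0,t_s]) + v(\zeta(x^o(t^-_s)),t_s)\bigr]=0$. Evaluating the first term's time-derivative yields $-H_{q_0}(x^o(t^-_s),p^o(t^-_s),u^o(t^-_s))$ by the Hamilton--Jacobi relation on leg one, while the derivative of $v(\cdot,t_s)$ along the optimal leg-two trajectory yields $-H_{q_1}(x^o(t_s),p^o(t_s),u^o(t_s))$ by the Hamilton--Jacobi relation on leg two (using $\partial v/\partial t = -H$ and $dv = p^o$ along the optimal trajectory). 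Setting the sum to zero produces the stated equality of Hamiltonians across the switch.

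The main obstacle I anticipate is making the interior/stationarity argument fully rigorous on the manifold: one must show that the set of attainable switching states forms a (sub)manifold near $x^o(t^-_s)$ so that ``interior point'' and first-order stationarity in $\mathcal{S}$ are meaningful, then carefully pass from the tangential first-order condition on $T_{x^o(t^-_s)}\mathcal{S}$ to the full covector identity using the $\mathcal{M}$-differentiability of $v$ and the structure of $T^*\zeta$ — this is precisely where \textbf{\textit{A4}} does its work and where the normalization constant $\mu$ (and the possibility of an abnormal multiplier forcing $\mu$ and $p^o$ jointly nontrivial) must be handled with care. The needle-variation machinery and the Hamilton--Jacobi identities on each leg I expect to be routine given the companion paper.
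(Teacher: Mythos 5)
Your overall architecture (classical PMP on each leg, with the new content concentrated at the switching instant) matches the paper's, but the way you extract the switching conditions diverges from the paper's proof and leaves genuine gaps. The paper's entire use of \textbf{\textit{A4}} is to prove one key fact (Lemma \ref{l5}): at the optimal switching state, $\langle dv(x^{o}(t_{s}^{-}),t_{s}),X\rangle=0$ for every $X$ tangent to $\mathcal{S}$, i.e.\ $dv(x^{o}(t_{s}^{-}),t_{s})$ is a \emph{normal} covector to the switching manifold. This is obtained by a Riemannian Taylor expansion of $v$ along geodesics of $\mathcal{S}$ in the directions $\pm X_{\mathcal{S}}$, using interiority and accessibility; once it is known, $dv$ is simply substituted for the normal covector $dN_{x^{o}(t_{s}^{-})}$ in the needle-variation proof of Theorem 4.5, and both the jump condition \eqref{ad} and the Hamiltonian continuity fall out of that machinery (the explicit expressions for $dt_{s}(\epsilon)/d\epsilon$ and for $\mu$). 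Your proposal never establishes this normality. Your stationarity argument is applied to $\xi\mapsto C(\xi)+v(\zeta(\xi),t_{s})$, which (i) produces the differential of $v$ at the \emph{post}-jump state pulled back through $T^{*}\zeta$ --- an object already accounted for by the term $T^{*}\zeta(p^{o}(t_{s}))$ --- rather than the differential $dv(x^{o}(t_{s}^{-}),t_{s})$ at the pre-jump state that actually appears in \eqref{ad}; and (ii) only constrains the adjoint jump on $T_{x^{o}(t_{s}^{-})}\mathcal{S}$. The proposed ``upgrade to the full cotangent identity by differentiability of $v$ on $\mathcal{M}$'' does not work: the component of $p^{o}(t_{s}^{-})-T^{*}\zeta(p^{o}(t_{s}))$ transverse to $\mathcal{S}$ is a free multiplier fixed by the backward construction of the adjoint, not by any regularity of $v$, and writing it as $\mu\,dv$ is legitimate only because $dv$ is normal to $\mathcal{S}$ --- the very fact you have not proved.

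The second gap is in the Hamiltonian continuity. Your derivation rests on the Hamilton--Jacobi identities $dv=p^{o}$ and $\partial v/\partial t=-H$ \emph{along the whole optimal trajectory}. But \textbf{\textit{A4}} grants differentiability of $v$ only at the single point $(x^{o}(t_{s}^{-}),t_{s})$; value functions of such problems are in general not differentiable elsewhere, so those identities are not available under \textbf{\textit{A1}}--\textbf{\textit{A4}}. The paper avoids this entirely: continuity of the Hamiltonian across the switch is derived from the needle-variation bookkeeping (compare the computation culminating in \eqref{kos1}--\eqref{kos2} in Appendix \ref{s5}, with the time-dependence of $\mathcal{S}$ suppressed), not from dynamic programming. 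To repair your argument you would either have to assume $v\in C^{1}$ in a neighborhood of the optimal trajectory (a much stronger hypothesis than \textbf{\textit{A4}}) or fall back on the variational route the paper takes.
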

\begin{proof}
The proof closely parallels the proof of Theorem 4.5 with the role of $dN_{x(t)}$ being replaced by $dv(x,t)$ whose existence is guaranteed by \textbf{\textit{A4}}; this is presented in Appendix \ref{s2}.
\end{proof}

%%%%%%%%%%%%%%%%END of NEW%%%%%%%%

\section{Time Varying Switching Manifolds and Discontinuity of the Hamiltonian}
\label{sec4}
In this section we extend the results obtained in the previous section to impulsive autonomous hybrid systems with  time varying switching manifolds. The HMP proof  parallels the proof of time invariant cases with a  modification in the  variation of the value function $v(x,t)$ with respect to the switching time. 
Since $\mathcal{S}$ is time varying,  we decompose the metric of $\mathcal{M}\times \mathds{R}$ as
\EQ g_{\mathcal{M}\times \mathds{R}}=g_{\mathcal{M}}\oplus g_{R},\EN
where $g_{R}$ is the Euclidean metric of $\mathds{R}$.
Now the one form corresponding to the normal vector $N_{(x,t)}$ at $(x,t)\in\mathcal{S}\subset \mathcal{M}\times \mathds{R}$ is defined as 
\EQ\label{kir11} dN_{(x,t)}:=g_{\mathcal{M}\times \mathds{R}}(N_{(x,t)},.)\in T^{*}_{(x,t)}(\mathcal{M}\times \mathds{R})=T^{*}_{x}\mathcal{M}\oplus T^{*}_{t}\mathds{R}.\EN
Based on the special form of $g_{\mathcal{M}\times \mathds{R}}$, we can decompose $dN_{(x,t)}$ as
\EQ dN_{(x,t)}=dN_{x}\oplus dN_{t},\hspace{.2cm} dN_{x}\in T^{*}_{x}\mathcal{M}, dN_{t}\in T^{*}_{t}\mathds{R}\simeq\mathds{R}.\EN

\begin{theorem}
\label{tt2}
Consider an impulsive MHOCP satisfying hypotheses \textbf{\textit{A1-A3}} where the switching manifold is an $n$ dimensional embedded time varying switching submanifold $\mathcal{S}\subset\mathcal{M}\times \mathds{R}$ and where the switching state jump is given by a smooth function $\zeta:\mathcal{M}\rightarrow \mathcal{M}$ whenever $(x(t^{-}),t)\in \mathcal{S}$. Then  corresponding to  the  optimal  control and optimal trajectory $u^{o},x^{o}$ with a single switching state at $(x^{o}(t_{s}),t_{s})$, there exists a nontrivial adjoint trajectory  $\lambda^{o}(.)=(x^{o}(.), p^{o}(.))\in T^{*}\mathcal{M}$ defined along the optimal state trajectory such that:
\EQ &&\hspace{-.5cm}H_{q_{i}}(x^{o}(t),p^{o}(t),u^{o}(t))\leq H_{q_{i}}(x^{o}(t),p^{o}(t),u_{1}),\quad \forall u_{1}\in U, t\in[t_{0},t_{f}],i=0,1,\nnum\\ \EN
and the corresponding optimal adjoint variable $\lambda^{o}(.)\in T^{*}\mathcal{M}$, (locally given by $\lambda^{o}(.)=(x^{o}(.),p^{o}(.))$) satisfies:
\EQ \dot{\lambda^{o}}(t)=\overrightarrow{H}_{q_{i}}(\lambda^{o}(t)),\quad t\in [t_{0},t_{f}], i=0,1.\EN
At the optimal switching state $x^{o}({t_{s}})$ and switching time $t_{s}$, there exists $dN_{x}\in T^{*}_{x}\mathcal{M}$ such that 

\EQ &&\hspace{-.5cm}p^{o}(t^{-}_{s})=T^{*}\zeta(p^{o}(t_{s}))+\mu dN_{x^{o}(t^{-}_{s})},\nnum\\&& \hspace{-.5cm} p^{o}(t^{-}_{s})\in T^{*}_{x^{o}(t^{-}_{s})}\mathcal{M},\quad p^{o}(t_{s})\in T^{*}_{x^{o}(t_{s})}\mathcal{M},\nnum\\&&\hspace{-.5cm} x^{o}(t_{s})=\zeta(x^{o}(t^{-}_{s})),\EN
\EQ x(0)=x^{o}_{0},\hspace{.2cm} p^{o}(t_{f})=dh(x^{o}(t_{f}))\in T^{*}_{x^{o}(t_{f})}\mathcal{M},\hspace{.2cm} dh= \sum^{n}_{i=1}\frac{\partial h}{\partial x^{i}}dx^{i}\in T^{*}_{x}\mathcal{M},\EN
where $\mu\in \mathds{R}$ and $T^{*}\zeta:T^{*}\mathcal{M}\rightarrow T^{*}\mathcal{M}$.
%\EQ dg(x^{o}(t_{s}))=\Sigma^{n}_{j=1}\frac{\partial g(x^{o}(t_{s}))}{\partial x^{j}}dx^{j}\in T^{*}_{x^{o}(t^{-}_{s})}\mathcal{M},\EN
The discontinuity of the Hamiltonian at $(x^{o}({t_{s}}),t_{s})$ is given by
\EQ H_{q_{0}}(x^{o}(t^{-}_{s}),p^{o}(t^{-}_{s}),u^{o}(t^{-}_{s}))=H_{q_{1}}(x^{o}(t_{s}),p^{o}(t_{s}),u^{o}(t_{s}))-\mu \langle dN_{t_{s}},\frac{\partial}{\partial t}\rangle,\nnum\\\EN
where $dN_{t_{s}}$ is the differential form corresponding to the time component of the normal vector at $(x({t^{-}_{s}}),t_{s})$ on the switching manifold $\mathcal{S}$. \\
\end{theorem}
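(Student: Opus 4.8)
The plan is to adapt the variational argument that establishes the time-invariant impulsive HMP (Theorem~\ref{t20} and its Appendix~\ref{s2} proof) by working on the product manifold $\mathcal{M}\times\mathds{R}$, where the time coordinate is promoted to a state variable so that the time-varying switching manifold $\mathcal{S}\subset\mathcal{M}\times\mathds{R}$ becomes a genuine (time-invariant) embedded submanifold. First I would set up the augmented system: adjoin the trivial dynamics $\dot{t}=1$, so the augmented state is $(x,t)$ with augmented costate $(p,p_{t})$, and observe that the augmented Hamiltonian in location $q_i$ is $\widetilde H_{q_i}=H_{q_i}(x,p,u)+p_{t}$. Along optimal arcs $p_t$ is constant on each location (since $\widetilde H$ does not depend explicitly on the augmented time-state in the autonomous case), and the transversality/boundary conditions for $p_t$ at $t_0$ and $t_f$ are the standard ones. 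The needle/control variation part of the argument is unchanged and yields the pointwise minimum condition and the adjoint ODE exactly as before, so the only genuinely new content lives at the switching instant.

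Next I would carry out the switching-time and switching-state variation. As in the proof of Theorem~4.5, I perturb the switching data $(x^{o}(t_s^{-}),t_s)$ along directions tangent to $\mathcal{S}$ and track the first-order change in the cost through the value function $v$ of the post-switch problem evaluated at the jumped state $\zeta(x^{o}(t_s^{-}))$ and time $t_s$. Optimality forces the first-order variation to vanish for all admissible tangent directions, which produces a one-form on $T_{(x^{o}(t_s^{-}),t_s)}\mathcal{S}$ that must be the restriction of the ``pre-switch minus pulled-back post-switch'' covector; since that covector annihilates $T\mathcal{S}$, it must be proportional to the conormal $dN_{(x,t)}$. Decomposing $dN_{(x,t)}=dN_{x}\oplus dN_{t}$ using the product metric $g_{\mathcal{M}\times\mathds{R}}=g_{\mathcal{M}}\oplus g_{R}$ then splits this single covector identity into its $T^{*}_{x}\mathcal{M}$ part, which gives $p^{o}(t_s^{-})=T^{*}\zeta(p^{o}(t_s))+\mu\,dN_{x^{o}(t_s^{-})}$, and its $T^{*}_{t}\mathds{R}$ part, which gives the relation between the jump in the augmented time-costate $p_t$ across the switch and $\mu\langle dN_{t_s},\partial/\partial t\rangle$. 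Finally, using continuity of the augmented Hamiltonian $\widetilde H$ across the switch (which holds because $\widetilde H$ is built to be the ``full'' conserved-type quantity of the autonomous augmented problem) and substituting $\widetilde H_{q_i}=H_{q_i}+p_t$ with the computed jump in $p_t$, I recover the stated Hamiltonian discontinuity $H_{q_0}(\cdot^{-})=H_{q_1}(\cdot)-\mu\langle dN_{t_s},\partial/\partial t\rangle$.

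The main obstacle I anticipate is making the value-function variation at the time-varying switching manifold rigorous: I must show that moving the switching time perturbs the accrued running cost on the first arc by $H_{q_0}\,\delta t_s$ (up to sign and higher order) while simultaneously shifting the post-switch initial condition to $(\zeta(x^{o}(t_s^{-}+\delta t_s)),t_s+\delta t_s)$, and then combine these with the $\zeta$-induced state variation so that the net first-order term is exactly $\langle p^{o}(t_s^{-})-T^{*}\zeta(p^{o}(t_s)),\delta x\rangle$ plus the time contribution, all evaluated on directions constrained to $T\mathcal{S}$. This requires differentiability of $v$ (guaranteed here only under \textbf{\textit{A4}}-type hypotheses in Theorem~\ref{t20}, but in Theorem~\ref{tt2} the manifold $\mathcal{S}$ is generic and one uses instead the existence of the conormal $dN$ via the constrained-optimization Lagrange-multiplier structure, which is the reason the statement only asserts existence of $dN_x$), plus a careful bookkeeping of which variations are free and which are forced by the constraint $(x(t^{-}),t)\in\mathcal{S}$. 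Once the augmented-space formulation is in place, the remaining steps are the same careful-but-routine manipulations as in the time-invariant case, with the metric splitting doing the work of separating the spatial adjoint jump from the Hamiltonian discontinuity.
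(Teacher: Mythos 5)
Your proposal is correct in outline, but it packages the argument differently from the paper. You perform the full classical time-augmentation: adjoin $\dot t=1$, introduce an augmented costate $(p,p_t)$ and augmented Hamiltonian $\widetilde H_{q_i}=H_{q_i}+p_t$, apply the time-invariant switching theorem on $\mathcal{M}\times\mathds{R}$ (with augmented jump map $\hat\zeta(x,t)=(\zeta(x),t)$, whose pullback acts as $T^{*}\zeta\oplus\mathrm{id}$), and read off the spatial part of the covector jump as the adjoint condition and the temporal part as the jump in $p_t$; continuity of $\widetilde H$ then yields $H_{q_0}(\cdot^{-})=H_{q_1}(\cdot)-\mu\langle dN_{t_s},\partial/\partial t\rangle$, with the correct sign. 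The paper's Appendix \ref{s5} proof never introduces $p_t$ or $\widetilde H$: it only augments the \emph{variation vector} at the switching instant, requiring $\frac{d\Phi_{\pi}}{d\epsilon}|_{\epsilon=0}\oplus\frac{dt_s(\epsilon)}{d\epsilon}|_{\epsilon=0}\frac{\partial}{\partial t_s}\in T_{(x^{o}(t_s),t_s)}\mathcal{S}$, pairs this with $dN_{(x,t)}=dN_x\oplus dN_t$ to solve explicitly for $\frac{dt_s(\epsilon)}{d\epsilon}|_{\epsilon=0}$ (note the denominator $\langle dN_{x},f_{q_0}\rangle+\langle dN_{t_s},\partial/\partial t\rangle$, a transversality quantity your route also implicitly needs to be nonzero), identifies $\mu$, and then computes the Hamiltonian jump by direct substitution. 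Your route is cleaner and more conceptual, at the price of having to verify that the augmented problem genuinely satisfies the hypotheses of the time-invariant theorem (codimension-one embedded $\mathcal{S}$ in $\mathcal{M}\times\mathds{R}$ with the product metric, smooth augmented jump, \textbf{\textit{A1-A3}} for the augmented dynamics) — verifications you assert but do not carry out; the paper's route is more computational but self-contained. One caution: your middle paragraph drifts into phrasing the switching variation through the value function $v$, which is the mechanism of Theorem \ref{t20} under \textbf{\textit{A4}}; for Theorem \ref{tt2} no differentiability of $v$ is assumed and the multiplier must be attached to the conormal $dN$ as you correctly note at the end, so that value-function detour should be excised from the final write-up.
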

 \begin{proof}
The proof is given in Appendix \ref{s5}.
\end{proof}
\section{Time Varying Impulsive Jumps}
In this section we investigate the HMP equations in the case of time varying impulsive jumps. For a HOCP with two discrete states, consider the state jump function as a smooth time varying map $\hat{\zeta}:\mathcal{M}\times \mathds{R}\rightarrow \mathcal{M}$. Therefore $T\hat{\zeta}:T\mathcal{M}\oplus T\mathds{R}\rightarrow T\mathcal{M}$ and $T^{*}\hat{\zeta}:T^{*}\mathcal{M}\rightarrow  T^{*}\mathcal{M}\oplus T\mathds{R} $.
we denote $T\hat{\zeta}=T\zeta\oplus D_{t}\zeta$, where
\EQ T\zeta:T\mathcal{M}\rightarrow T\mathcal{M},\quad D_{t}\zeta:T\mathds{R}\rightarrow T\mathcal{M},\EN
where $T\zeta$ and $D_{t}\zeta$ are the pushforwards of $\hat{\zeta}$ with respect to  $t\in \mathds{R}$ and $x\in \mathcal{M}$ respectively. The following theorem gives the HMP for hybrid impulsive systems in the case of time varying impulse jumps which is consistent with the results presented in \cite{Reidinger2}.
\begin{theorem}
\label{tt22}
Consider an impulsive MHOCP satisfying hypotheses \textbf{\textit{A1-A3}}. The switching manifold is assumed to be an $n$ dimensional embedded time varying submanifold $\mathcal{S}\subset\mathcal{M}\times \mathds{R}$ and the switching state jump is given by a time varying smooth function $\hat{\zeta}:\mathcal{M}\times \mathds{R}\rightarrow \mathcal{M}$ which is enabled whenever $(x(t^{-}),t)\in \mathcal{S}$; then  corresponding to  the  optimal  control and optimal trajectory $u^{o},x^{o}$, with a single switching state at $(x^{o}(t_{s}),t_{s})$,  there exists a nontrivial adjoint trajectory $\lambda^{o}(.)=(x^{o}(.),p^{o}(.))\in T^{*}\mathcal{M}$ defined along the optimal state trajectory such that:
\EQ &&\hspace{-.5cm}H_{q_{i}}(x^{o}(t),p^{o}(t),u^{o}(t))\leq H_{q_{i}}(x^{o}(t),p^{o}(t),u_{1}),\quad \forall u_{1}\in U, t\in[t_{0},t_{f}],i=0,1,\nnum\\ \EN
and the corresponding optimal adjoint trajectory $\lambda^{o}(.)\in T^{*}\mathcal{M}$, locally given by $\lambda^{o}(.)=(x^{o}(.),p^{o}(.))$, satisfies
\EQ \dot{\lambda^{o}}(t)=\overrightarrow{H}_{q_{i}}(\lambda^{o}(t)),\quad t\in [t_{0},t_{f}], i=0,1.\EN
At the optimal switching state $x^{o}({t_{s}})$ and switching time $t_{s}$, there exists $dN_{x}\in T^{*}_{x}\mathcal{M}$ such that 

\EQ &&\hspace{-.5cm}p^{o}(t^{-}_{s})=T^{*}\zeta(p^{o}(t_{s}))+\mu dN_{x^{o}(t^{-}_{s})},\nnum\\&& \hspace{-.5cm} p^{o}(t^{-}_{s})\in T^{*}_{x^{o}(t^{-}_{s})}\mathcal{M},\quad p^{o}(t_{s})\in T^{*}_{x^{o}(t_{s})}\mathcal{M},\nnum\\&&\hspace{-.5cm} x^{o}(t_{s})=\zeta(x^{o}(t^{-}_{s})),\EN
\EQ \hspace{.3cm}x^{o}(t_{0})=x_{0},\hspace{.2cm} p^{o}(t_{f})=dh(x^{o}(t_{f}))\in T^{*}_{x^{o}(t_{f})}\mathcal{M},\hspace{.2cm} dh= \sum^{n}_{i=1}\frac{\partial h}{\partial x^{i}}dx^{i}\in T^{*}_{x}\mathcal{M},\EN
where $\mu\in \mathds{R}$, 
\EQ T^{*}\hat{\zeta}=T^{*}\zeta\oplus D^{*}_{t}\zeta:T^{*}\mathcal{M}\rightarrow T^{*}\mathcal{M}\oplus T^{*}\mathds{R},\EN
and
\EQ T^{*}\zeta:T^{*}\mathcal{M}\rightarrow T^{*}\mathcal{M},\quad D^{*}_{t}\zeta:T^{*}\mathcal{M}\rightarrow T^{*}\mathds{R}.\EN
The discontinuity of the Hamiltonian at $(x^{o}({t_{s}}),t_{s})$ is given by
\EQ &&H_{q_{0}}(x^{o}(t^{-}_{s}),p^{o}(t^{-}_{s}),u^{o}(t^{-}_{s}))=\nnum\\&&H_{q_{1}}(x^{o}(t_{s}),p^{o}(t_{s}),u^{o}(t_{s}))-D^{*}_{t}\zeta(p^{o}(t_{s}))-\mu \langle dN_{t_{s}},\frac{\partial}{\partial t}\rangle.\EN
\end{theorem}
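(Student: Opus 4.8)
The plan is to transcribe the proof of Theorem~\ref{tt2} given in Appendix~\ref{s5}, carrying along the one genuinely new feature: the explicit dependence of the jump map $\hat{\zeta}$ on the switching time. As there, I would pass to the augmented manifold $\mathcal{M}\times\mathds{R}$ with the product metric $g_{\mathcal{M}}\oplus g_{R}$, so that the time-varying switching manifold $\mathcal{S}$ becomes an ordinary embedded submanifold whose unit normal splits as $N_{(x,t)}=N_{x}\oplus N_{t}$ with associated one-form $dN_{(x,t)}=dN_{x}\oplus dN_{t}$ as in~(\ref{kir11}); in the jump relation $x^{o}(t_{s})=\zeta(x^{o}(t_{s}^{-}))$ the map $\zeta$ denotes $\hat{\zeta}(\cdot,t_{s})$, the spatial slice at the optimal switching time. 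Away from the switching instant the argument is word for word that of Theorem~\ref{tt2}: I introduce needle variations of $u^{o}$ on $[t_{0},t_{s})$ under $f_{q_{0}}$ and on $(t_{s},t_{f}]$ under $f_{q_{1}}$, propagate the resulting state variations by the linearised dynamics, dualise to obtain the adjoint $\lambda^{o}=(x^{o},p^{o})$ satisfying $\dot{\lambda^{o}}=\overrightarrow{H}_{q_{i}}(\lambda^{o})$ together with the pointwise Hamiltonian minimisation condition and the terminal transversality $p^{o}(t_{f})=dh(x^{o}(t_{f}))$.

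The substantive step is the variation of the switching data. I would treat the switching time as a free parameter, perturbing $t_{s}\mapsto t_{s}+\epsilon\tau+o(\epsilon)$, and let $\delta x(t_{s}^{-})$ be the accompanying first-order variation of the pre-jump state. Since $(x^{o}(t_{s}^{-}),t_{s})\in\mathcal{S}$, the pair $(\delta x(t_{s}^{-}),\tau)$ must lie in $T_{(x^{o}(t_{s}^{-}),t_{s})}\mathcal{S}$, i.e.\ be annihilated by $dN_{(x^{o}(t_{s}^{-}),t_{s})}$; this is the two-sided constraint that produces the $\mu\,dN$ multiplier. The post-jump state is $x(t_{s})=\hat{\zeta}(x^{\mathrm{pre}}(t_{s}+\epsilon\tau),t_{s}+\epsilon\tau)$, so expanding to first order on $\mathcal{M}$ and using $T\hat{\zeta}=T\zeta\oplus D_{t}\zeta$ gives $\delta x(t_{s}^{+})=T\zeta\big(\delta x(t_{s}^{-})+f_{q_{0}}(x^{o}(t_{s}^{-}),u^{o}(t_{s}^{-}))\,\tau\big)+D_{t}\zeta\,\tau$. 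The summand $D_{t}\zeta\,\tau$, absent in the time-invariant jump of Theorem~\ref{tt2}, is the only new contribution; pairing it against $p^{o}(t_{s})$ and using $T^{*}\hat{\zeta}=T^{*}\zeta\oplus D^{*}_{t}\zeta$ with the identification $T^{*}\mathds{R}\simeq\mathds{R}$ is what produces the term $D^{*}_{t}\zeta(p^{o}(t_{s}))$ in the Hamiltonian jump.

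Finally I would assemble the transversality conditions. First-order optimality forces the total cost variation to vanish for every $(\delta x(t_{s}^{-}),\tau)\in T_{(x^{o}(t_{s}^{-}),t_{s})}\mathcal{S}$; written in terms of the adjoint this reads $\langle p^{o}(t_{s}^{-}),\delta x(t_{s}^{-})\rangle-\langle p^{o}(t_{s}),\delta x(t_{s}^{+})\rangle+\big(H_{q_{0}}(x^{o}(t_{s}^{-}),p^{o}(t_{s}^{-}),u^{o}(t_{s}^{-}))-H_{q_{1}}(x^{o}(t_{s}),p^{o}(t_{s}),u^{o}(t_{s}))\big)\tau=0$ on $T\mathcal{S}$. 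Substituting the expansion of $\delta x(t_{s}^{+})$ and collecting the coefficients of $\delta x(t_{s}^{-})$ and of $\tau$, the separation/Lagrange-multiplier argument applied to a linear functional vanishing on the kernel of $dN_{(x^{o}(t_{s}^{-}),t_{s})}$ yields $\mu\in\mathds{R}$ with $p^{o}(t_{s}^{-})-T^{*}\zeta(p^{o}(t_{s}))=\mu\,dN_{x^{o}(t_{s}^{-})}$ from the $\mathcal{M}$-component, and $H_{q_{0}}(x^{o}(t_{s}^{-}),p^{o}(t_{s}^{-}),u^{o}(t_{s}^{-}))=H_{q_{1}}(x^{o}(t_{s}),p^{o}(t_{s}),u^{o}(t_{s}))-D^{*}_{t}\zeta(p^{o}(t_{s}))-\mu\langle dN_{t_{s}},\frac{\partial}{\partial t}\rangle$ from the $\mathds{R}$-component. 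Nontriviality of $(\lambda^{o},\mu)$ follows exactly as in Theorem~\ref{tt2}: if $\mu=0$ and $p^{o}$ vanished at one point it would vanish on the whole corresponding mode by uniqueness of solutions of $\dot{\lambda^{o}}=\overrightarrow{H}_{q_{i}}(\lambda^{o})$, which is incompatible with $p^{o}(t_{f})=dh$ and the jump relation under \textbf{\textit{A1-A3}}. The main obstacle is making the expansion of $\delta x(t_{s}^{+})$ intrinsic, i.e.\ verifying that the split of the variation between $T\zeta$ and $D_{t}\zeta$ and the identity $\langle p^{o}(t_{s}),D_{t}\zeta\,\tau\rangle=\langle D^{*}_{t}\zeta(p^{o}(t_{s})),\tau\,\partial/\partial t\rangle$ are chart independent; everything else is a transcription of the proof of Theorem~\ref{tt2}.
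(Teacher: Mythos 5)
Your proposal is correct and follows essentially the same route as the paper's proof in Appendix \ref{s6}: augment to $\mathcal{M}\times\mathds{R}$, constrain the joint state--time variation to $T_{(x^{o}(t_{s}^{-}),t_{s})}\mathcal{S}$ via $dN_{(x,t)}=dN_{x}\oplus dN_{t}$, split the jump differential as $T\hat{\zeta}=T\zeta\oplus D_{t}\zeta$, and obtain the extra Hamiltonian-jump term from the pairing $\langle p^{o}(t_{s}),D_{t}\zeta\rangle=D^{*}_{t}\zeta(p^{o}(t_{s}))$. The only (minor) divergence is that the paper produces $\mu$ by an explicit ratio formula coming from the needle-variation computation of $\frac{dt_{s}(\epsilon)}{d\epsilon}|_{\epsilon=0}$ and then verifies the adjoint and Hamiltonian relations by direct substitution, whereas you extract $\mu$ abstractly as a Lagrange multiplier for a functional annihilating $T\mathcal{S}$ (which implicitly uses the two-sidedness of the achievable variations guaranteed by the accessibility hypothesis); both arguments coincide in substance.
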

\begin{proof}
The proof is given in Appendix \ref{s6}.
\end{proof}
\section{Interior Optimal Switching States, Time Varying Switching Manifolds and Impulsive Jumps}
%\subsection{Time Varying Switching Manifolds}
In this section we extend Theorem \ref{tt2} to MHOCPs satisfying \textbf{\textit{A4}} where the switching manifold $\mathcal{S}$ and the impulsive jump $\hat{\zeta}$ are both time varying. The results here are consistent with the results presented in \cite{Reidinger2}.
\\\\
In the case where the switching manifold is a time variant submanifold $\mathcal{S}\subset \mathcal{M}\times \mathds{R}$, we have  
\EQ \hat{d}v(x,t)\in T^{*}_{(x,t)}(\mathcal{M}\times \mathds{R})=T^{*}_{x}\mathcal{M}\oplus T^{*}_{t}\mathds{R},\EN
where locally
\EQ  &&\hspace{-.7cm}\hat{dv}(x^{o}(t^{-}_{s}),t_{s})=\sum^{n}_{j=1}\frac{\partial v(x^{o}(t^{-}_{s}),t_{s})}{\partial x^{j}}dx^{j}+D^{*}_{t}v(x^{o}(t^{-}_{s}),t_{s})dt\in T^{*}_{x^{o}(t^{-}_{s})}\mathcal{M}\oplus T^{*}_{t_{s}}\mathds{R}.\nnum\\\EN
The following lemma is an extension of Lemma \ref{l5} on time varying switching manifolds.
\begin{lemma}
\label{ll5}
For an MHOCP with a single switching from the discrete state $q_{0}$ to the discrete state $q_{1}$ at the unique switching time $t_{s}$ on the optimal trajectory $(x^{o}(.),u^{o}(.))$ and  an embedded time varying switching manifold $\mathcal{S}\subset \mathcal{M}\times \mathds{R}$ of dimension $k\leq dim(\mathcal{M})$; then at the optimal switching state and time $(x^{o}(t^{-}_{s}),t_{s})\in \mathcal{S}$, 
\EQ \langle \hat{d}v(x^{o}(t^{-}_{s}),t_{s}),X\rangle=0, \quad \forall X\in T_{(x^{o}(t^{-}_{s}),t_{s})}\mathcal{S}.\EN
\end{lemma}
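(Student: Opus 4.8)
The plan is to reduce the statement to the known time-invariant version, Lemma \ref{l5}, by working on the product manifold $\mathcal{M}\times\mathds{R}$ rather than on $\mathcal{M}$ alone. First I would observe that the hybrid value function $v(x,t)$, which by \textbf{\textit{A4}} is differentiable at the optimal switching state $(x^{o}(t^{-}_{s}),t_{s})$, can be regarded as a smooth real-valued function on an open neighbourhood of $(x^{o}(t^{-}_{s}),t_{s})$ in $\mathcal{M}\times\mathds{R}$, and that $\hat{d}v$ is precisely its total differential, decomposed as the $T^{*}_{x}\mathcal{M}$-part $\sum_{j}\partial_{x^{j}}v\,dx^{j}$ plus the $T^{*}_{t}\mathds{R}$-part $D^{*}_{t}v\,dt$. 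The claim $\langle\hat{d}v,X\rangle=0$ for all $X\in T_{(x^{o}(t^{-}_{s}),t_{s})}\mathcal{S}$ is then exactly the assertion that $v$ is stationary along the switching manifold at the optimal switching point, once one accounts for the (zero) first variation of cost.

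Next I would set up the variational argument. Fix an arbitrary tangent vector $X\in T_{(x^{o}(t^{-}_{s}),t_{s})}\mathcal{S}$ and choose a smooth curve $s\mapsto(\gamma(s),\tau(s))\in\mathcal{S}$ with $(\gamma(0),\tau(0))=(x^{o}(t^{-}_{s}),t_{s})$ and $(\dot\gamma(0),\dot\tau(0))=X$; this is possible because $\mathcal{S}$ is an embedded submanifold of $\mathcal{M}\times\mathds{R}$ and, by the interior-point part of \textbf{\textit{A4}}, the optimal switching state lies in the interior of the attainable switching states, so nearby points $(\gamma(s),\tau(s))$ remain attainable switching configurations for admissible trajectories. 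For each such $s$ one constructs a comparison trajectory that coincides with the optimal one outside a neighbourhood of $t_{s}$, switches at the perturbed state and time $(\gamma(s),\tau(s))$, and is corrected on the two adjacent sub-intervals via the standard needle-free (endpoint) variation / controllability argument already used in the proof of Theorem 4.5 and Lemma \ref{l5}. The cost of this comparison trajectory, viewed as a function of $s$, is $\geq$ the optimal cost, with equality at $s=0$; since $v$ is differentiable at the switching point by \textbf{\textit{A4}}, the first-order term in $s$ vanishes, which yields $\langle\hat{d}v(x^{o}(t^{-}_{s}),t_{s}),X\rangle=0$.

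To make the reduction to Lemma \ref{l5} clean, I would state the time-invariant Lemma \ref{l5} applied to the "space" $\mathcal{M}\times\mathds{R}$: in that setting the switching "manifold" is the genuine submanifold $\mathcal{S}\subset\mathcal{M}\times\mathds{R}$, the "state" is $(x,t)$, the "value function" is $v(x,t)$, and Lemma \ref{l5} directly gives $\langle\hat{d}v,X\rangle=0$ for $X$ tangent to $\mathcal{S}$. The only genuine content beyond a verbatim citation is checking that the hypotheses of Lemma \ref{l5} are met on the product: that $v$ is differentiable at the switching point (this is \textbf{\textit{A4}}), that the switching point is interior to the attainable set in $\mathcal{S}$ (also \textbf{\textit{A4}}, now read in $\mathcal{M}\times\mathds{R}$), and that $\mathcal{S}$ being $k$-dimensional with $k\leq\dim\mathcal{M}$ still allows the full range of tangential perturbations — which it does, since the perturbation in the time direction is always available.

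The main obstacle I anticipate is the construction and controllability of the family of comparison trajectories whose switching configuration moves along $\mathcal{S}$ in both the state and the time coordinate simultaneously: one must show that, for small $s$, there exist admissible controls on $[t_{0},\tau(s)]$ and $[\tau(s),t_{f}]$ driving the system to switch at exactly $(\gamma(s),\tau(s))$ while keeping the endpoints fixed, and that the resulting cost is differentiable in $s$ with the correct first-order expansion involving $\hat{d}v$. This is where the interior-point assumption in \textbf{\textit{A4}} is essential (it guarantees such perturbed switching configurations are attainable in a full neighbourhood), and where one leans on the accessibility/controllability hypotheses \textbf{\textit{A1--A3}} exactly as in the time-invariant proof of Lemma \ref{l5}; once the perturbation family is in hand, the vanishing of the first variation and the differentiability of $v$ give the result by the same computation as in the time-invariant case, now carried out on $T^{*}_{(x,t)}(\mathcal{M}\times\mathds{R})=T^{*}_{x}\mathcal{M}\oplus T^{*}_{t}\mathds{R}$.
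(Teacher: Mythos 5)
Your proposal is correct and follows essentially the same route as the paper, whose own proof is simply the remark that one repeats the argument of Lemma \ref{l5} with the extra variable $t_{s}$, i.e.\ carries out the first-order variation of the value function along $\mathcal{S}$ inside the product $\mathcal{M}\times \mathds{R}$ using the decomposition $T^{*}_{(x,t)}(\mathcal{M}\times\mathds{R})=T^{*}_{x}\mathcal{M}\oplus T^{*}_{t}\mathds{R}$, the interior/attainability part of \textbf{\textit{A4}} for two-sided perturbations, and differentiability of $v$ to kill the first-order term. The only cosmetic difference is that you perturb along arbitrary curves in $\mathcal{S}$ where the paper uses geodesics of the induced metric and a Riemannian Taylor expansion; at first order these are equivalent.
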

\begin{proof}
The proof parallels the proof of Lemma \ref{l5} concerning the extra variable $t_{s}$.
\end{proof}
\begin{theorem}
\label{tt222}
Consider an impulsive MHOCP satisfying hypotheses \textbf{\textit{A1-A4}}; then  corresponding to  the  optimal  control and optimal trajectory $u^{o},x^{o}$,  there exists a nontrivial adjoint trajectory $\lambda^{o}(.)=(x^{o}(.),p^{o}(.))\in T^{*}\mathcal{M}$ defined along the optimal state trajectory such that:
\EQ &&\hspace{-.5cm}H_{q_{i}}(x^{o}(t),p^{o}(t),u^{o}(t))\leq H_{q_{i}}(x^{o}(t),p^{o}(t),u_{1}),\quad \forall u_{1}\in U, t\in[t_{0},t_{f}],i=0,1,\nnum\\ \EN
and the corresponding optimal adjoint variable $\lambda^{o}(.)\in T^{*}\mathcal{M}$, locally given as $\lambda^{o}(.)=(x^{o}(.),p^{o}(.))$, satisfies
\EQ \dot{\lambda^{o}}(t)=\overrightarrow{H}_{q_{i}}(\lambda^{o}(t)),\quad t\in [t_{0},t_{f}], i=0,1.\EN
At the optimal switching state $x^{o}({t_{s}})$ and switching time $t_{s}$, we have 

\EQ &&\hspace{-.5cm}p^{o}(t^{-}_{s})=T^{*}\zeta(p^{o}(t_{s}))+\mu dv(x^{o}(t^{-}_{s}),t_{s}),\nnum\\&& \hspace{-.5cm} p^{o}(t^{-}_{s})\in T^{*}_{x^{o}(t^{-}_{s})}\mathcal{M},\quad p^{o}(t_{s})\in T^{*}_{x^{o}(t_{s})}\mathcal{M},\nnum\\&&\hspace{-.5cm} x^{o}(t_{s})=\zeta(x^{o}(t^{-}_{s})),\EN
\EQ \hspace{.3cm}x^{o}(t_{0})=x_{0},\hspace{.2cm} p^{o}(t_{f})=dh(x^{o}(t_{f}))\in T^{*}_{x^{o}(t_{f})}\mathcal{M},\hspace{.2cm} dh= \sum^{n}_{i=1}\frac{\partial h}{\partial x^{i}}dx^{i}\in T^{*}_{x}\mathcal{M},\EN
where $\mu\in \mathds{R}$, 
\EQ T^{*}\hat{\zeta}=T^{*}\zeta\oplus D^{*}_{t}\zeta:T^{*}\mathcal{M}\rightarrow T^{*}\mathcal{M}\oplus T^{*}\mathds{R},\EN
and
\EQ T^{*}\zeta:T^{*}\mathcal{M}\rightarrow T^{*}\mathcal{M},\quad D^{*}_{t}\zeta:T^{*}\mathcal{M}\rightarrow T^{*}\mathds{R}.\EN
The discontinuity of the Hamiltonian at $(x^{o}(t^{-}_{s}), t_{s}),$  is given as follows:
\EQ &&H_{q_{0}}(x^{o}(t^{-}_{s}),p^{o}(t^{-}_{s}),u^{o}(t^{-}_{s}))=\nnum\\&&H_{q_{1}}(x^{o}(t_{s}),p^{o}(t_{s}),u^{o}(t_{s}))-D^{*}_{t}\zeta(p^{o}(t_{s}))-\mu D^{*}_{t}v(x^{o}(t^{-}_{s}),t_{s}).\EN

\end{theorem}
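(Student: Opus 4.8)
The plan is to follow the proof of Theorem~\ref{tt22} essentially line for line, the only substantive change being that the normal one--form $dN_{(x,t)}=dN_{x}\oplus dN_{t}$ of the (codimension one, as in Theorem~\ref{tt2}) switching manifold $\mathcal{S}\subset\mathcal{M}\times\mathds{R}$ is replaced throughout by the value--function differential $\hat{d}v(x,t)=dv(x,t)+D^{*}_{t}v(x,t)\,dt$, whose existence at $(x^{o}(t^{-}_{s}),t_{s})$ is provided by \textbf{\textit{A4}} and whose vanishing on $T_{(x^{o}(t^{-}_{s}),t_{s})}\mathcal{S}$ is provided by Lemma~\ref{ll5}. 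First I would dispose of the continuous--time conclusions: restricting the optimal trajectory to the arcs $[t_{0},t_{s}]$ and $[t_{s},t_{f}]$, on which the dynamics are $f_{q_{0}}$ and $f_{q_{1}}$, and applying the classical Maximum Principle on $\mathcal{M}$ to each arc yields the pointwise maximisation $H_{q_{i}}(x^{o},p^{o},u^{o})\le H_{q_{i}}(x^{o},p^{o},u_{1})$, the Hamiltonian flow $\dot{\lambda}^{o}=\overrightarrow{H}_{q_{i}}(\lambda^{o})$ on each arc, the terminal transversality $p^{o}(t_{f})=dh(x^{o}(t_{f}))$, and nontriviality of $\lambda^{o}$; none of these statements involves $\mathcal{S}$ or $\hat\zeta$, so they are unchanged from Theorem~\ref{tt22}.

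The substance is the behaviour at the switching instant, where I would pass to $\mathcal{M}\times\mathds{R}$ with the split metric $g_{\mathcal{M}}\oplus g_{R}$ and to the extended impulse map $\Phi(x,t):=(\hat\zeta(x,t),t)$, whose cotangent map is $T^{*}\Phi(p,\rho)=(T^{*}\zeta\,p,\ D^{*}_{t}\zeta\,p+\rho)$ on $T^{*}_{x}\mathcal{M}\oplus T^{*}_{t}\mathds{R}$. With the augmentation of the state by time, under which the momentum conjugate to $t$ on each arc is $-H_{q_{i}}$, form the extended pre-- and post--switch covectors
\[ \Lambda^{-}:=(p^{o}(t^{-}_{s}),\,-H_{q_{0}}(x^{o}(t^{-}_{s}),p^{o}(t^{-}_{s}),u^{o}(t^{-}_{s}))),\quad \Lambda^{+}:=(p^{o}(t_{s}),\,-H_{q_{1}}(x^{o}(t_{s}),p^{o}(t_{s}),u^{o}(t_{s}))). \]
A needle--variation family of comparison trajectories parametrised by the candidate switching point $(\xi,\tau)\in\mathcal{S}$ near $(x^{o}(t^{-}_{s}),t_{s})$, built as in the proof of Theorem~\ref{tt2} but with $\tau$ now also varied through $\hat\zeta(\xi,\tau)$, together with first--order stationarity of the cost --- available because the optimal switching state is an interior point of the attainable switching states and $v$ is differentiable there --- shows that $\Lambda^{-}-T^{*}\Phi(\Lambda^{+})$ annihilates $T_{(x^{o}(t^{-}_{s}),t_{s})}\mathcal{S}$. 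Since $\mathcal{S}$ has codimension one its annihilator is one dimensional, and by Lemma~\ref{ll5} it is spanned by $\hat{d}v(x^{o}(t^{-}_{s}),t_{s})$; hence there is a scalar $\mu\in\mathds{R}$ with
\[ \Lambda^{-}-T^{*}\Phi(\Lambda^{+})=\mu\,\hat{d}v(x^{o}(t^{-}_{s}),t_{s}). \]

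Projecting this identity onto the two summands of $T^{*}_{x^{o}(t^{-}_{s})}\mathcal{M}\oplus T^{*}_{t_{s}}\mathds{R}$ then finishes the proof. The $\mathcal{M}$--component, using $T^{*}\Phi(p,\rho)|_{\mathcal{M}}=T^{*}\zeta\,p$, gives $p^{o}(t^{-}_{s})=T^{*}\zeta(p^{o}(t_{s}))+\mu\,dv(x^{o}(t^{-}_{s}),t_{s})$ together with the state matching $x^{o}(t_{s})=\zeta(x^{o}(t^{-}_{s}))$; the $\mathds{R}$--component, using $T^{*}\Phi(p,\rho)|_{\mathds{R}}=D^{*}_{t}\zeta\,p+\rho$, gives $-H_{q_{0}}(\cdot)=D^{*}_{t}\zeta(p^{o}(t_{s}))-H_{q_{1}}(\cdot)+\mu\,D^{*}_{t}v(x^{o}(t^{-}_{s}),t_{s})$, which rearranges to the asserted discontinuity $H_{q_{0}}(\cdot)=H_{q_{1}}(\cdot)-D^{*}_{t}\zeta(p^{o}(t_{s}))-\mu\,D^{*}_{t}v(x^{o}(t^{-}_{s}),t_{s})$.

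I expect the one delicate point to be the bookkeeping in the temporal slot of $\Lambda^{-}-T^{*}\Phi(\Lambda^{+})$: one must keep the $dt$--contribution generated by the explicit $t$--dependence of the impulse $\hat\zeta$ --- which produces the $D^{*}_{t}\zeta(p^{o}(t_{s}))$ term, exactly as in Theorem~\ref{tt22} --- cleanly separate from the $dt$--contribution generated by the explicit $t$--dependence of $v$ on the time--varying manifold, which under \textbf{\textit{A4}} produces the $\mu\,D^{*}_{t}v$ term in place of the $\mu\langle dN_{t_{s}},\partial/\partial t\rangle$ term appearing in Theorems~\ref{tt2}--\ref{tt22}. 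The extended--space formulation above is precisely what prevents these two sources of time dependence from being conflated; once the single identity $\Lambda^{-}-T^{*}\Phi(\Lambda^{+})=\mu\,\hat{d}v$ is in hand the rest is projection, and the needle--variation construction and the stationarity argument underlying Lemma~\ref{ll5} are taken over verbatim from the proofs of Theorem~\ref{tt2} and Lemma~\ref{l5}.
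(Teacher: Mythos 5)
Your proposal is correct and follows essentially the same route as the paper, which simply combines the needle-variation argument of Theorem~\ref{tt2} (with the $D^{*}_{t}\zeta$ term handled as in Theorem~\ref{tt22}) with Lemma~\ref{ll5}, so that $dN_{(x,t)}=dN_{x}\oplus dN_{t_{s}}$ is replaced throughout by $\hat{d}v=dv\oplus D^{*}_{t}v\,dt$. Your packaging of the switching conditions as a single annihilation identity $\Lambda^{-}-T^{*}\Phi(\Lambda^{+})=\mu\,\hat{d}v$ on $T^{*}_{x}\mathcal{M}\oplus T^{*}_{t}\mathds{R}$, followed by projection onto the two summands, is only a tidier presentation of the same computation the paper carries out componentwise in Appendices~\ref{s5} and~\ref{s6}.
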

\begin{proof}
The proof parallels that of Theorem \ref{tt2} and employs the results of Lemma \ref{ll5}.
\end{proof}

\section{Extension to Multiple Switchings Cases}
\label{sec5}
In this section we obtain the HMP theorem statement for multiple switching hybrid systems where switching manifolds are time invariant. The standing assumption in this section is that $x^{o}(.)$ is an optimal trajectory under the optimal control  $u^{0}(.)$ for a given MHOCP; it is further assumed that this is a sequence of autonomous transitions along $x^{o}(.)$ at the distinct time instants $t_{0},t_{1},...,t_{L}$ and $\mathcal{S}_{i}$ is a time invariant switching manifold subcomponent of $\mathcal{M}$. 
\begin{lemma}
\label{l77}
Without loss of generality, assume that for all sufficiently small $0\leq \epsilon$ the needle variation $u_{\pi}(t,.)$  applied at  $t^{1}$, $t_{j-1}<t^{1}<t_{j}$, the resulting perturbed trajectories intersect  only $\mathcal{S}_{i}:=n_{q_{i}, q_{i+1}}, i=0,...,L$ and assume further that switching times are greater than the optimal switching times, i.e. $t_{i}\leq t_{i}(\epsilon),\quad i=j,...,L$. Then the state variation at $t_{f}$ is given as 
\EQ \label{122}&&\hspace{-.7cm}\frac{d}{d\epsilon}\Phi_{\pi}^{(t_{f},t^{1}),x}|_{\epsilon=0}=\Big(\prod^{L-j}_{i=0}  T\Phi^{(t_{i+j+1},t_{i+j})}_{f_{q_{i+j}}}\circ T\zeta_{i+j+1}\Big)\circ T\Phi^{(t_{j},t^{1})}_{f_{q_{i}}} \nnum\\&&\hspace{-.7cm}\times\big(f_{q_{i}}(x^{o}(t^{1}),u_{1})-f_{q_{i}}(x^{o}(t^{1}),u^{o}(t^{1}))\big)+\sum^{L-j}_{i=0}\big(\prod^{L-j}_{l=i}T\Phi^{(t_{l+j+1},t_{l+j})}_{f_{q_{l+j}}}\circ T\zeta_{l+j+1}\big)\nnum\\&&\hspace{-.7cm}\times\Big(\frac{d t_{i+j}(\epsilon)}{d\epsilon}|_{\epsilon=0}\big(f_{q_{i+j+1}}(x^{o}(t_{i+j}), u^{o}(t_{i+j}))-T\zeta_{i+j}f_{q_{i+j}}(x^{o}(t^{-}_{i+j}), u^{o}(t^{-}_{i+j}))\big)\Big)\nnum\\&&\hspace{2.7cm} \in T_{x(t_{f})}\mathcal{M},\EN
where $T\zeta_{L+1}=I$ and for simplicity  we use $\zeta_{i}$ instead of $\zeta_{q_{i},q_{i+1}}$ for $ i=0,...,L$.
\end{lemma}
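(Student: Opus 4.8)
The plan is to derive (\ref{122}) by pushing a single Pontryagin needle variation through the finite chain of switches $t_{j}<t_{j+1}<\dots<t_{L}$ that lie to the right of the injection time $t^{1}$, using just two elementary operations: free transport of the first-order variation field by the linearised (tangent) flow on a fixed discrete mode, and transport of that field across a single autonomous switch. The second operation carries an extra term coming from the $\epsilon$-dependence $t_{i}(\epsilon)$ of the switching instant, and (\ref{122}) then follows by an induction on the number $L-j+1$ of switches met after $t^{1}$. This is the hybrid counterpart of the classical variational computation in Pontryagin's proof and of the single-switch formulas obtained earlier; the ordering hypothesis $t_{i}\le t_{i}(\epsilon)$ is a normalisation, the reverse case being symmetric.

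Three ingredients are needed. (i) \emph{Needle datum}: for the needle variation $u_{\pi}(t,\cdot)$ of width $\epsilon$ and value $u_{1}$ at $t^{1}$, the standard first-order estimate produces, immediately past the needle, the variation vector $v:=f_{q}(x^{o}(t^{1}),u_{1})-f_{q}(x^{o}(t^{1}),u^{o}(t^{1}))\in T_{x^{o}(t^{1})}\mathcal{M}$, $q$ being the mode active on $(t_{j-1},t_{j})$; this is the vector on the first line of (\ref{122}). (ii) \emph{Free propagation}: on an open interval between consecutive switches the perturbed and nominal trajectories obey the same mode ODE, so the variation field $\xi(\cdot)$ satisfies the first variational equation along $x^{o}$ and hence $\xi(t)=T\Phi^{(t,s)}_{f_{q}}\xi(s)$ there; this yields the tangent-flow factors in (\ref{122}). (iii) \emph{One-switch transport}: since $t_{k}\le t_{k}(\epsilon)$, on $[t_{k},t_{k}(\epsilon)]$ the nominal trajectory is already in the post-switch mode while the perturbed one is still in the pre-switch mode and gets jumped by $\zeta_{k}$ at $t_{k}(\epsilon)$; comparing the two trajectories at a common instant just past $t_{k}(\epsilon)$ and expanding to first order in $\epsilon$, with $\dot t_{k}:=\frac{d t_{k}(\epsilon)}{d\epsilon}|_{\epsilon=0}$, one obtains
\[
\xi(t_{k}^{+})=T\zeta_{k}\,\xi(t_{k}^{-})+\dot t_{k}\big(f_{q_{k+1}}(x^{o}(t_{k}),u^{o}(t_{k}))-T\zeta_{k}f_{q_{k}}(x^{o}(t_{k}^{-}),u^{o}(t_{k}^{-}))\big),
\]
the sign of the correction being fixed by the convention $t_{k}\le t_{k}(\epsilon)$; this is exactly the increment appearing under the sum in (\ref{122}).

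With (i)--(iii) in hand, I would assemble the claim by induction on $t_{j},\dots,t_{L}$: starting from $v$ at $t^{1}$, alternately apply free propagation to reach the next switch and the one-switch rule to cross it, and finish with one more free propagation on $(t_{L},t_{f})$, where $T\zeta_{L+1}=I$ since there is no jump at $t_{f}$. Unwinding the recursion, the needle datum $v$ survives multiplied by the full ordered composition of tangent-jump and tangent-flow maps from $t^{1}$ to $t_{f}$, namely $\big(\prod_{i=0}^{L-j}T\Phi^{(t_{i+j+1},t_{i+j})}_{f_{q_{i+j}}}\circ T\zeta_{i+j+1}\big)\circ T\Phi^{(t_{j},t^{1})}$, while each increment $\dot t_{i+j}$, created at the switch $t_{i+j}$, is thereafter acted on only by the maps lying in its future, i.e. by $\prod_{l=i}^{L-j}T\Phi^{(t_{l+j+1},t_{l+j})}_{f_{q_{l+j}}}\circ T\zeta_{l+j+1}$. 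Summing these contributions and using $\frac{d}{d\epsilon}\Phi_{\pi}^{(t_{f},t^{1}),x}|_{\epsilon=0}=\xi(t_{f})$ gives (\ref{122}).

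I expect the main obstacle to be ingredient (iii) and the bookkeeping built on it: one must verify that $t_{k}(\epsilon)$ is differentiable at $\epsilon=0$ and compute $\dot t_{k}$ correctly, and then treat the mode mismatch on $[t_{k},t_{k}(\epsilon)]$ --- where the nominal and perturbed trajectories occupy different discrete states --- carefully enough that the correction $f_{q_{k+1}}-T\zeta_{k}f_{q_{k}}$, and above all its sign under $t_{k}\le t_{k}(\epsilon)$, come out right. This rests on the transversality of $x^{o}$ to $\mathcal{S}_{k}$ at $t_{k}$ and on the standing assumption that the perturbed trajectories meet only the manifolds $\mathcal{S}_{i}$; granting these, the remainder is routine chain-rule composition of tangent flows via the identities $\Phi^{(t,s)}\circ\Phi^{(s,r)}=\Phi^{(t,r)}$ and $T\Phi^{(t,s)}\circ T\Phi^{(s,r)}=T\Phi^{(t,r)}$.
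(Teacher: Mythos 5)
Your overall route is exactly the one the paper intends: the printed proof of Lemma \ref{l77} is only a pointer to Lemma 3.1 and to the single-switch variational formulas (C.6)--(C.7) of the main paper, ``extended'' to the case of several switching times, and your three ingredients (needle datum, transport by the variational flow between switches, a one-switch crossing rule carrying a $\frac{dt_{k}(\epsilon)}{d\epsilon}$ correction), assembled by induction over $t_{j},\dots,t_{L}$, are precisely that extension written out. Structurally there is nothing to object to, and your identification of ingredient (iii) as the only nontrivial step is accurate.

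The gap is that ingredient (iii) is asserted rather than derived, and the asserted sign does not survive the expansion you describe. Under $t_{k}\le t_{k}(\epsilon)$ the perturbed trajectory is still in mode $q_{k}$ on $[t_{k},t_{k}(\epsilon)]$ while the nominal one has already jumped, so with $f^{-}:=f_{q_{k}}(x^{o}(t_{k}^{-}),u^{o}(t_{k}^{-}))$, $f^{+}:=f_{q_{k+1}}(x^{o}(t_{k}),u^{o}(t_{k}))$ and $\dot t_{k}:=\frac{dt_{k}(\epsilon)}{d\epsilon}|_{\epsilon=0}\ge 0$ one finds $x_{\epsilon}(t_{k}(\epsilon)^{-})=x^{o}(t_{k}^{-})+\epsilon(\xi(t_{k}^{-})+\dot t_{k}f^{-})+o(\epsilon)$, hence after the jump $x_{\epsilon}(t_{k}(\epsilon)^{+})=x^{o}(t_{k})+\epsilon\, T\zeta_{k}(\xi(t_{k}^{-})+\dot t_{k}f^{-})+o(\epsilon)$, while the nominal trajectory has meanwhile advanced to $x^{o}(t_{k}(\epsilon))=x^{o}(t_{k})+\epsilon\dot t_{k}f^{+}+o(\epsilon)$; subtracting,
\[
\xi(t_{k}^{+})=T\zeta_{k}\xi(t_{k}^{-})+\dot t_{k}\big(T\zeta_{k}f^{-}-f^{+}\big),
\]
i.e.\ the correction is the \emph{negative} of the one you (and the printed statement of the lemma) write. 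A one-dimensional sanity check with $\zeta_{k}=\mathrm{id}$, $f^{-}=0$, $f^{+}=1$, $\dot t_{k}=1$ gives $x_{\epsilon}(1)-x^{o}(1)=-\epsilon$, confirming the minus sign. Since you explicitly single out this sign as the delicate point and then settle it ``by the convention $t_{k}\le t_{k}(\epsilon)$'' without computation, the step as written would fail if actually carried out: either your ingredient (iii) must be corrected (in which case it no longer reproduces (\ref{122}) as printed), or the discrepancy must be traced to the sign convention the main paper attaches to $\frac{dt_{s}(\epsilon)}{d\epsilon}$ in (C.6)--(C.7) (compare the sign flip between (\ref{las}) and (\ref{las1})). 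Either way, the crossing rule needs an explicit first-order derivation rather than an appeal to convention.
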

\begin{proof}
The proof is based on the results of Lemma 3.1 and an extension of (C.6) and (C.7) to the case where $t_{i+j}(\epsilon)$ is the $(i+j)$th switching time corresponding to $u_{\pi}(t,\epsilon)$.  
\end{proof}

Employing the Lemma above, Lemma 4.4 can be generalized to multiple switching hybrid systems as follows:
\begin{lemma}
\label{l66}
For a HOCP corresponding to a given sequence of event transitions $ i=0,...,L,$  we have
\EQ \langle dh(x^{o}(t_{f})),v_{\pi}(t_{f})\rangle\geq 0,\quad \forall v_{\pi}(t_{f})\in K_{t_{f}},\EN
where
\EQ K_{t_{f}}=\bigcup^{L}_{r=1}K^{r}_{t_{f}},\EN
and
\EQ \label{222}&&\hspace{0cm}K^{r}_{t_{f}}=\bigcup_{t_{r-1}\leq t< t_{r}}\bigcup_{u_{1}\in U}\Big(\prod^{L-r}_{i=0}  T\Phi^{(t_{i+r+1},t_{i+r})}_{f_{q_{i+r}}}\circ T\zeta_{i+r+1}\Big)\circ\big\{ T\Phi^{(t_{r},t)}_{f_{q_{r}}}\big(f_{q_{r}}(x^{o}(t),u_{1})\nnum\\&&\hspace{0cm}-f_{q_{r}}(x^{o}(t),u^{o}(t))\big)\big\}+\bigcup_{t_{r-1}\leq t< t_{r}}\bigcup_{u_{1}\in U}\sum^{L-r}_{i=0}\big(\prod^{L-r}_{l=i}T\Phi^{(t_{l+r+1},t_{l+r})}_{f_{q_{l+r}}}\circ T\zeta_{l+j+1}\big)(\frac{dt_{i+r}(\epsilon)}{d\epsilon}|_{\epsilon=0}\nnum\\&&\times\Big(f_{q_{i+r+1}}(x^{o}(t_{i+r}), u^{o}(t_{i+r}))-T\zeta_{i+r}f_{q_{i+r}}(x^{o}(t^{-}_{i+r}), u^{o}(t^{-}_{i+r}))\Big),\nnum\\\EN
\end{lemma}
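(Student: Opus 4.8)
\textbf{Proof plan for Lemma \ref{l66}.}
The plan is to combine the multiple‑switching state‑variation formula of Lemma \ref{l77} with the optimality of $x^{o}(.)$ in essentially the same way that the single‑switching cone lemma (Lemma 4.4 in the referenced manuscript) is proved. First I would fix a switching index $r\in\{1,\dots,L\}$ and a time $t^{1}\in[t_{r-1},t_{r})$, and apply a needle variation $u_{\pi}(t,\epsilon)$ of width $O(\epsilon)$ at $t^{1}$ with value $u_{1}\in U$ on the needle interval. For all sufficiently small $\epsilon>0$ the perturbed trajectory still undergoes the same sequence of autonomous transitions, hitting only $\mathcal{S}_{i}$, $i=r,\dots,L$, at perturbed times $t_{i}(\epsilon)\ge t_{i}$; this is exactly the hypothesis under which Lemma \ref{l77} computes $\frac{d}{d\epsilon}\Phi_{\pi}^{(t_{f},t^{1}),x}\big|_{\epsilon=0}$, and that derivative is precisely the generic element generating $K^{r}_{t_{f}}$ in \eqref{222}.

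Next I would invoke optimality: since $x^{o}(.)$ minimizes the cost $h$ at the terminal point over all admissible variations preserving the transition sequence, the first‑order change in the cost under the needle variation is nonnegative, i.e. $\frac{d}{d\epsilon} h\big(\Phi_{\pi}^{(t_{f},t^{1}),x}(\epsilon)\big)\big|_{\epsilon=0}\ge 0$. By the chain rule this is $\langle dh(x^{o}(t_{f})),v_{\pi}(t_{f})\rangle\ge 0$, where $v_{\pi}(t_{f})=\frac{d}{d\epsilon}\Phi_{\pi}^{(t_{f},t^{1}),x}\big|_{\epsilon=0}$ is the tangent vector from Lemma \ref{l77}. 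Ranging $t^{1}$ over $[t_{r-1},t_{r})$, $u_{1}$ over $U$, and $r$ over $1,\dots,L$ produces every generator of $K_{t_{f}}=\bigcup_{r=1}^{L}K^{r}_{t_{f}}$, so the inequality holds for every such generator. Because the $\frac{dt_{i+r}(\epsilon)}{d\epsilon}\big|_{\epsilon=0}$ terms enter linearly and are themselves determined (via the switching‑manifold constraint $(x(t^{-}),t)\mapsto$ hitting $\mathcal{S}_{i}$) by the same $\epsilon$, each generator is a genuine admissible first‑order variation, so no extra convexification argument beyond collecting the union is needed for the stated conclusion.

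The main obstacle, and the step requiring the most care, is justifying that the perturbed trajectory really does preserve the discrete transition sequence and that the induced switching‑time variations $t_{i}(\epsilon)$ are differentiable in $\epsilon$ at $\epsilon=0$ with the derivatives appearing in \eqref{122}. This rests on the transversality of $x^{o}(.)$ to each $\mathcal{S}_{i}$ at $t_{i}$ (an accessibility/regularity consequence built into the standing hypotheses) together with the implicit function theorem applied to the hitting condition for each successive manifold; one propagates the state perturbation through the flow $T\Phi^{(t_{i+1},t_{i})}_{f_{q_{i}}}$ and the reset pushforwards $T\zeta_{i+1}$, exactly as in the extension of (C.6)–(C.7) quoted in the proof of Lemma \ref{l77}. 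Once that regularity is in hand, the remainder is the bookkeeping already carried out in \eqref{122}, and the cone property is immediate. I would also remark that, as in the single‑switching case, the union over $r$ and over the needle base‑point $t^{1}$ is not required to be convex for the statement as written; convexity (needed only if one later wants a separating‑hyperplane/adjoint construction) would follow from the standard argument of concatenating needle variations, but that is deferred to the subsequent HMP proof.
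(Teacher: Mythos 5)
Your proposal is correct and follows essentially the same route as the paper: the paper's own proof simply states that it parallels the single-switching cone lemma (Lemma 4.4 of the main manuscript) while substituting the multiple-switching state-variation formula of Lemma \ref{l77}, which is exactly the combination of needle variation, propagation through the flows and reset maps, and first-order optimality that you describe. Your additional remarks on the transversality/implicit-function-theorem justification of the switching-time derivatives and on the fact that no convexification of $K_{t_{f}}$ is needed for the stated inequality are consistent with, and somewhat more explicit than, the paper's treatment.
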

\begin{proof}
The proof parallels the proof of Lemma 4.4 and employs the results of Lemma \ref{l77}.
\end{proof}

The following theorem gives the HMP statement for the case of multiple switchings impulsive hybrid systems.
\begin{theorem}
\label{t4}
Consider a multiple switching  impulsive MHOCP satisfying hypotheses  \textbf{\textit{A1-A3}}. Then  corresponding to  the  optimal  control and optimal state trajectory $u^{o},x^{o}$,  there exists a nontrivial $\lambda^{o}(.)\in T^{*}\mathcal{M}$ along the optimal state trajectory such that:
\EQ \hspace{-.3cm}H_{q_{i}}(x^{o}(t),p^{o}(t),u^{o}(t))\leq H_{q_{i}}(x^{o}(t),p^{o}(t),u_{1}), \hspace{.2cm}\forall u_{1}\in U, t\in[t_{0},t_{f}], i=0,...,L,\nnum\\\EN
and the corresponding optimal adjoint trajectory $\lambda^{o}(.)\in T^{*}\mathcal{M}$,  locally given by $\lambda^{o}(.)=(x^{o}(.),p^{o}(.))$, satisfies:
\EQ \dot{\lambda^{o}}(t)=\overrightarrow{H}_{q_{i}}(\lambda^{o}(t)),\quad t\in [t_{0},t_{f}],\hspace{.1cm} i=0,...,L.\EN
 At the optimal switching state $x^{o}({t_{i}})$ and switching time $t_{i},$ there exists $dN^{i}_{x}\in T^{*}_{x}\mathcal{S}_{i}$ such that

\EQ &&\hspace{-.5cm}p^{o}(t^{-}_{i})=T^{*}\zeta_{i}(p^{o}(t_{i}))+\mu_{i} dN^{i}_{x^{o}(t^{-}_{i})},\nnum\\&& \hspace{-.5cm} p^{o}(t^{-}_{i})\in T^{*}_{x^{o}(t^{-}_{i})}\mathcal{M},\quad p^{o}(t_{i})\in T^{*}_{x^{o}(t_{i})}\mathcal{M},\nnum\\&&\hspace{-.5cm} x^{o}(t_{i})=\zeta_{i}(x^{o}(t^{-}_{i})),\EN
where $\mu_{i}\in \mathds{R}$ and $T^{*}\zeta_{i}:T^{*}\mathcal{M}\rightarrow T^{*}\mathcal{M}.$
The continuity of the Hamiltonian at the switching instants $(x^{o}(t^{-}_{i}), t_{i}), i=0,...,L,$ is given by
\EQ\hspace{-.5cm} H_{q_{i}}(x^{o}(t^{-}_{i}),p^{o}(t^{-}_{i}),u^{o}(t^{-}_{i}))=H_{q_{i+1}}(x^{o}(t_{i}),p^{o}(t_{i}),u^{o}(t_{i})),\hspace{.2cm} i=0,...,L.\EN

\end{theorem}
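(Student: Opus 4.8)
The plan is to build the adjoint trajectory $\lambda^{o}=(x^{o},p^{o})$ by backward integration from $t_{f}$ and to extract every conclusion of the theorem from the single variational inequality of Lemma~\ref{l66}, organising the argument as a downward induction over the switching times $t_{L},t_{L-1},\dots,t_{0}$. This is the multiple-switching counterpart of the single-switching HMP (the proof of Theorem~\ref{tt2} specialised to a time-invariant $\mathcal S$), the genuinely new ingredient being the nested propagation formula of Lemma~\ref{l77}; throughout one works, as there, with the Mayer-form (cost-augmented) system, so that pairing with the free variational flow is preserved. To start the induction, set $p^{o}(t_{f}):=dh(x^{o}(t_{f}))$ and, on $(t_{L},t_{f}]$, let $\lambda^{o}$ solve $\dot\lambda^{o}=\overrightarrow{H}_{q_{L}}(\lambda^{o})$ with this terminal value; the cotangent flow it generates is dual to the variational flow $T\Phi^{(t_{f},t)}_{f_{q_{L}}}$ occurring in Lemma~\ref{l66}, so $\langle p^{o}(t),w\rangle=\langle dh(x^{o}(t_{f})),\,T\Phi^{(t_{f},t)}_{f_{q_{L}}}w\rangle$ for every $w\in T_{x^{o}(t)}\mathcal M$. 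Inserting the needle-variation part of $K^{L}_{t_{f}}$ into $\langle dh(x^{o}(t_{f})),v_{\pi}(t_{f})\rangle\ge 0$ and using this identity yields $\langle p^{o}(t),f_{q_{L}}(x^{o}(t),u_{1})-f_{q_{L}}(x^{o}(t),u^{o}(t))\rangle\ge 0$ for all $u_{1}\in U$, $t\in(t_{L},t_{f}]$, i.e.\ the minimization of $H_{q_{L}}$.

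For the inductive step, suppose $p^{o}$ is defined on $(t_{i},t_{f}]$ (with the reset rules at $t_{i+1},\dots,t_{L}$ already in force) and that $\langle p^{o}(t_{i}),w^{+}\rangle\ge 0$ for every post-jump variation $w^{+}\in T_{x^{o}(t_{i})}\mathcal M$ arising from an admissible needle variation before $t_{i}$. By Lemma~\ref{l77}, a pre-jump variation $w^{-}\in T_{x^{o}(t_{i}^{-})}\mathcal M$ forces an induced switching-time shift $\frac{dt_{i}(\epsilon)}{d\epsilon}|_{\epsilon=0}=\ell_{i}(w^{-})$, the linear functional $\ell_{i}$ being fixed by the requirement that the perturbed pre-jump state stay on the time-invariant $\mathcal S_{i}$; since $f_{q_{i}}$ is transversal to $\mathcal S_{i}$ (an accessibility consequence of \textbf{\textit{A1-A3}}), $\ell_{i}$ is, up to sign, $w^{-}\mapsto\langle dN^{i}_{x^{o}(t_{i}^{-})},w^{-}\rangle/\langle dN^{i}_{x^{o}(t_{i}^{-})},f_{q_{i}}(x^{o}(t_{i}^{-}),u^{o}(t_{i}^{-}))\rangle$, with $dN^{i}$ the conormal of $\mathcal S_{i}$, and $w^{+}=T\zeta_{i}w^{-}+\ell_{i}(w^{-})\bigl(f_{q_{i+1}}(x^{o}(t_{i}),u^{o}(t_{i}))-T\zeta_{i}f_{q_{i}}(x^{o}(t_{i}^{-}),u^{o}(t_{i}^{-}))\bigr)$. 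Substituting into $\langle p^{o}(t_{i}),w^{+}\rangle\ge 0$ and collecting the coefficient of $w^{-}$ shows that the covector $p^{o}(t_{i}^{-}):=T^{*}\zeta_{i}(p^{o}(t_{i}))+\mu_{i}\,dN^{i}_{x^{o}(t_{i}^{-})}$, with $\mu_{i}$ the scalar produced by this collection, satisfies $\langle p^{o}(t_{i}^{-}),w^{-}\rangle\ge 0$ for all admissible $w^{-}$; this is the asserted reset relation and it re-establishes the inductive hypothesis one switch earlier. Tracing the same identification, $\langle p^{o}(t_{i}^{-}),f_{q_{i}}(x^{o}(t_{i}^{-}),u^{o}(t_{i}^{-}))\rangle$ and $\langle p^{o}(t_{i}),f_{q_{i+1}}(x^{o}(t_{i}),u^{o}(t_{i}))\rangle$ differ by exactly the term $\mu_{i}\langle dN^{i},f_{q_{i}}\rangle$ built into the definition of $\mu_{i}$, so — the running cost being continuous across the autonomous switch — one gets the Hamiltonian continuity $H_{q_{i}}(x^{o}(t_{i}^{-}),p^{o}(t_{i}^{-}),u^{o}(t_{i}^{-}))=H_{q_{i+1}}(x^{o}(t_{i}),p^{o}(t_{i}),u^{o}(t_{i}))$. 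Running the induction down to $t_{0}$ produces $\lambda^{o}$ on $[t_{0},t_{f}]$, the adjoint equation on each phase, all reset relations and Hamiltonian continuities, and — feeding the needle-variation part of each $K^{r}_{t_{f}}$ through the backward propagation — the minimization of $H_{q_{r}}$ on each interval; nontriviality follows as in the single-switching case from $p^{o}(t_{f})=dh(x^{o}(t_{f}))$ (with the cost multiplier $p_{0}$ adjoined in the abnormal case).

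The delicate point is the bookkeeping in the reset step: the cone $K_{t_{f}}=\bigcup_{r}K^{r}_{t_{f}}$ of Lemma~\ref{l66} entangles, within one attainable endpoint variation, a single needle direction with the entire cascade of induced switching-time shifts at all later switches, so one must show that pushing $\langle dh,v_{\pi}(t_{f})\rangle\ge 0$ backward through the alternating composition of variational flows $T\Phi$ and reset differentials $T\zeta_{i}$ splits cleanly — the needle part giving the phase-wise Pontryagin inequality and the shift part, together with the tangency constraint to $\mathcal S_{i}$, giving precisely a $dN^{i}$-correction — and that one and the same backward-propagated covector realises all of these at once. This rests on the transversality of each $f_{q_{i}}$ to $\mathcal S_{i}$ (so $\langle dN^{i},f_{q_{i}}\rangle\neq 0$ and $\mu_{i}$ is well defined) and on the $\mathcal S_{i}$ being time-invariant, which is exactly why the Hamiltonian is genuinely continuous here rather than jumping by a controlled amount as in Theorems~\ref{tt2}--\ref{tt222}.
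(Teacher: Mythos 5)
Your proposal is correct and follows essentially the same route as the paper, whose proof of Theorem \ref{t4} is simply the statement that it parallels the single-switching argument (Theorem 4.5 of the main paper) using the variational cone of Lemma \ref{l66} built from Lemma \ref{l77}. Your backward induction over $t_L,\dots,t_0$, with the transversality condition $\langle dN^{i},f_{q_{i}}\rangle\neq 0$ fixing $\mu_{i}$ and the tangency to the time-invariant $\mathcal S_{i}$ yielding Hamiltonian continuity, is precisely that argument written out in detail.
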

\begin{proof}
The proof parallels the proof of Theorem 4.5 employing the results of Lemma \ref{l66}.
\end{proof}

\appendix
 %\appendix
 \section{Proof of Theorem \ref{t20}}
\label{s2}
The following results for the variation of the hybrid value function is presented and then a complete proof of  Theorem \ref{t20} is provided.

%\subsection{Variation of the Value Function On an Embedded Submanifold $\mathcal{S}$}
Since  $\mathcal{S}$ is an embedded submanifold of $\mathcal{M}$, necessarily there  exists an embedding inclusion $i$ from $\mathcal{S}$ to $i(\mathcal{S})\subset \mathcal{M} $. The push-forward of  $i$ is given as
\EQ Ti:T_{x}\mathcal{S}\rightarrow T_{x}\mathcal{M}.\EN
For any tangent vector $X\in T_{x}\mathcal{S}$, the image vector $Ti(X)\in T_{x}\mathcal{M}$ is a tangent vector on $\mathcal{M}$.
There exists a local coordinate representation of $X$, i.e. $X=\sum^{n}_{j=1}X^{j}\frac{\partial}{\partial x^{j}}$, such that   $X\in T_{x}\mathcal{S}$ if and only if $X^{j}=0,\quad  j>k,$ where $k$ is the dimension of $\mathcal{S}$, see \cite{Lee2}. The following lemma gives a relation between  \\$dv(x^{o}(t^{-}_{s}),t_{s})=\sum^{n}_{j=1}\frac{\partial v(x^{o}(t_{s}),t_{s})}{\partial x^{j}}dx^{j}\in T^{*}_{x^{o}(t^{-}_{s})}\mathcal{M}$  where $v(x^{o}(t_{s}),t_{s})$ is smooth by \textbf{\textit{A4}} and a tangent vector $X\in T_{x^{o}(t^{-}_{s})}\mathcal{M}$ which is also a tangent vector in $T_{x^{o}(t^{-}_{s})}\mathcal{S}$ in the local coordinate system given above. The statement of the following lemma is given for a general embedded submanifold  $\mathcal{S}$ which is not necessarily $n-1$ dimensional.
\begin{lemma}
\label{l5}
Consider  an  MHOCP with a single switching from the discrete state $q_{0}$ to the discrete state $q_{1}$ at the unique switching time $t_{s}$ on the optimal trajectory $(x^{o}(.),u^{o}(.))$ and a $k$ dimensional  embedded switching manifold $\mathcal{S}\subset \mathcal{M}$ satisfying \textbf{\textit{A1}}-\textbf{\textit{A4}}; then at the optimal switching state $x^{o}(t_{s})\in \mathcal{S}$  and switching time $t_{s}$, we have 
\EQ \langle dv(x^{o}(t^{-}_{s}),t_{s}),X\rangle=0, \quad \forall X\in Ti(T_{x^{o}(t^{-}_{s})}\mathcal{S}).\EN
\end{lemma}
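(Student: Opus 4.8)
The plan is to convert hypothesis \textbf{\textit{A4}} into a first-order necessary condition of interior-minimum type. Fix an arbitrary $X\in Ti(T_{x^{o}(t^{-}_{s})}\mathcal{S})$. Since $\mathcal{S}$ is an embedded submanifold of $\mathcal{M}$, I would first choose a smooth curve $\gamma:(-\delta,\delta)\to\mathcal{S}$ with $\gamma(0)=x^{o}(t^{-}_{s})$ and $Ti(\dot{\gamma}(0))=X$. The interiority clause of \textbf{\textit{A4}} --- that $x^{o}(t^{-}_{s})$ lies in the interior of the attainable switching states on $\mathcal{S}$ --- lets me shrink $\delta$ so that each $\gamma(\sigma)$ is again an attainable switching state: there is an admissible control steering the $q_{0}$ dynamics onto $\mathcal{S}$ at $\gamma(\sigma)$ at time $t_{s}$, switching there through $\zeta$, and then evolving under the $q_{1}$ dynamics.

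Next I would form the induced one-parameter family of admissible hybrid trajectories with switching data $(\gamma(\sigma),t_{s})$, normalized so that $\sigma=0$ recovers the optimal pair $(x^{o},u^{o})$, and let $J(\sigma)$ be the optimal cost over this family with that switching data held fixed (the residual control freedom in $q_{0}$ before the switch and in $q_{1}$ after it being chosen optimally). The dynamic-programming characterization of the hybrid value function identifies $J(\sigma)$ with $v(\gamma(\sigma),t_{s})$ up to a term whose $\sigma$-derivative at $0$ vanishes; combined with the differentiability of $v$ at $(x^{o}(t^{-}_{s}),t_{s})$ supplied by \textbf{\textit{A4}}, this gives $\frac{d}{d\sigma}J(\sigma)|_{\sigma=0}=\langle dv(x^{o}(t^{-}_{s}),t_{s}),X\rangle$.

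Optimality of $(x^{o},u^{o})$ gives $J(\sigma)\geq J(0)$ for all $|\sigma|<\delta$, and because $\sigma=0$ is interior to this parameter interval the bounds obtained from $\sigma>0$ and $\sigma<0$ force $\frac{d}{d\sigma}J(\sigma)|_{\sigma=0}=0$. Hence $\langle dv(x^{o}(t^{-}_{s}),t_{s}),X\rangle=0$, and since $X$ was an arbitrary element of $Ti(T_{x^{o}(t^{-}_{s})}\mathcal{S})$, the lemma follows.

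The main obstacle is the construction underlying the second paragraph: one must verify, using the accessibility and regularity hypotheses \textbf{\textit{A1}}--\textbf{\textit{A3}} together with the variational estimates already employed in the geometric (non-differentiable) setting, both that the curve $\gamma$ in $\mathcal{S}$ is genuinely realizable by admissible controlled $q_{0}$-trajectories hitting $\mathcal{S}$ at $\gamma(\sigma)$ at the \emph{same} switching time $t_{s}$, and that the minimized cost over the family is faithfully represented by $v(\cdot,t_{s})$ modulo an $X$-independent term. This is exactly where ``differentiable'' in \textbf{\textit{A4}} cannot be weakened to ``continuous'': continuity alone would give only a one-sided estimate, whereas the interior-minimum argument requires the two-sided cancellation.
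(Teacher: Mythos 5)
Your argument is essentially the paper's own proof: both exploit the interiority clause of \textbf{\textit{A4}} to perturb the switching state two-sidedly along a curve in $\mathcal{S}$ through $x^{o}(t^{-}_{s})$, expand $v(\cdot,t_{s})$ to first order using the differentiability guaranteed by \textbf{\textit{A4}}, and invoke optimality together with accessibility to force the first-order term to vanish. The only cosmetic difference is that the paper realizes the curve as $\exp_{x^{o}(t^{-}_{s})}\theta X_{\mathcal{S}}$ and phrases the expansion via the Levi--Civita connection of the induced metric on $\mathcal{S}$ (applying it to both $X_{\mathcal{S}}$ and $-X_{\mathcal{S}}$), whereas you use a general curve $\gamma(\sigma)$ and a cost functional $J(\sigma)$; the logical content is the same.
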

\begin{proof}
Since $X\in Ti(T_{x^{o}(t^{-}_{s})}\mathcal{S})$ there exists $X_{\mathcal{S}}\in T_{x^{o}(t^{-}_{s})}\mathcal{S}$ such that $X=Ti(X_{\mathcal{S}})$. By applying the same extension method employed in Lemma 4.4, we extend $X_{\mathcal{S}}$ to a vector field $X^{'}_{\mathcal{S}}\in \mathfrak{X}(\mathcal{S})$ such that $X^{'}_{\mathcal{S}}(x^{o}(t^{-}_{s}))=X_{\mathcal{S}}$.

Let us denote the induced Riemannian metric from $\mathcal{M}$ to $\mathcal{S}$ as $g_{\mathcal{S}}$.  By the fundamental theorem of  existence of geodesics and the Taylor expansion on Riemannian manifolds we have 
\EQ \label{tay2}v((exp_{x^{o}(t^{-}_{s})}\theta X_{\mathcal{S}}),t_{s})&=&v(x^{o}(t^{-}_{s}),t_{s})+\theta (\nabla^{'}_{X^{'}_{\mathcal{S}}}v)(x^{o}(t^{-}_{s}),t_{s})+o(\theta),\nnum\\&& 0<\theta<\theta^{*},\EN
where $\nabla^{'}$ is the Levi-Civita connection of $\mathcal{S}$ with respect to the induced metric $g_{\mathcal{S}}$. Since $\mathcal{S}$ is an embedded submanifold of $\mathcal{M}$, the inclusion map is a full rank homeomorphism from $\mathcal{S}$ to $i(\mathcal{S})$, therefore, for each $X\in  Ti(T_{x^{o}(t^{-}_{s})}\mathcal{S})$, the corresponding $X_{\mathcal{S}}$ is unique. The vector space property of $T_{x^{o}(t^{-}_{s})}\mathcal{S}$ implies $-X_{\mathcal{S}}\in T_{x^{o}(t_{s})}\mathcal{S}$, hence, by the optimality of $x^{o}(t^{-}_{s})$ on $\mathcal{S}$ and the accessibility of $\dot{x}(t)=f_{q_{0}}(x,u)$, an application of  (\ref{tay2}) to $v$ along $-X_{\mathcal{S}}$ gives
\EQ\label{ll1}  \nabla^{'}_{X^{'}_{\mathcal{S}}}v=0,\quad \forall X^{'}_{\mathcal{S}}\in T_{x^{o}(t^{-}_{s})}\mathcal{S}.\EN
(\ref{ll1}) and (4.4)(ii)  together imply 
\EQ \label{ll2}\frac{\partial v}{\partial x^{j}}(x^{o}(t^{-}_{s}),t_{s})=0,\quad j=1,...,k,\EN
where $k$ is the dimension of $\mathcal{S}$.
In the local coordinates of $x^{o}(t_{s})\in \mathcal{M}$, (\ref{ll2}) yields  
\EQ\langle dv(x^{o}(t^{-}_{s}),t_{s}),X\rangle=\langle \sum^{n}_{j=1}\frac{\partial v(x^{o}(t^{-}_{s}),t_{s})}{\partial x^{j}}dx^{j}, \sum^{n}_{j=1}X^{j}\frac{\partial}{\partial x^{j}}\rangle, \EN
where (\ref{ll2})  together with $X^{j}=0,  j>k$ completes the proof.
\end{proof}
\\\\
The proof of Theorem \ref{t20} is then given as follows:
\begin{proof}
The proof parallels the proof of Theorem 4.5 where by Lemma \ref{l5}, $dN_{x^{o}(t^{-}_{s})}$ is replaced by $dv(x^{o}(t^{-}_{s}),t_{s})$.\end{proof}\\

\section{Proof of Theorem \ref{tt2}}
\label{s5}
\begin{proof}
The first step of the proof of Theorem 4.5 is unchanged. For the control needle  variation before the optimal switching time $t_{s}$, i.e. step 2, in case $(i)$: $t_{s}\leq  t_{s}(\epsilon)$, we have 
\EQ\label{llq} \frac{d \Phi_{\pi, f_{q_{0}}}^{(t^{-}_{s}(\epsilon),t^{1}),x} }{d\epsilon}|_{\epsilon=0}\oplus\frac{d t_{s}(\epsilon)}{d \epsilon}|_{\epsilon=0}\frac{\partial}{\partial t_{s}}&\hspace{-.1cm}=&\hspace{-.1cm}(\frac{d t_{s}(\epsilon)}{d \epsilon}|_{\epsilon=0})\nnum\\&&\hspace{-.3cm}\times f_{q_{0}}(x^{o}(t^{-}_{s}),u^{o}(t^{-}_{s}))+T\Phi^{(t^{-}_{s},t^{1})}_{f_{q_{0}}}[f_{q_{0}}(x^{o}(t^{1}),u_{1})\nnum\\&&\hspace{-.3cm}-f_{q_{0}}(x^{o}(t^{1}),u^{o}(t^{1}))]\oplus\frac{d t_{s}(\epsilon)}{d \epsilon}|_{\epsilon=0}\frac{\partial}{\partial t_{s}}\in  T_{(x^{o}(t_{s}),t_{s})}\mathcal{S}.\EN
And in case $(ii)$, i.e. $t_{s}(\epsilon)\leq t_{s}$, we have 
\EQ \label{llqq} \frac{d \Phi_{\pi, f_{q_{0}}}^{(t^{-}_{s}(\epsilon),t^{1}),x} }{d\epsilon}|_{\epsilon=0}\oplus\frac{d t_{s}(\epsilon)}{d \epsilon}|_{\epsilon=0}\frac{\partial}{\partial t_{s}}&\hspace{-.1cm}=&\hspace{-.1cm}-\frac{d t_{s}(\epsilon)}{d \epsilon}|_{\epsilon=0}\nnum\\&&\hspace{-.3cm}\times f_{q_{0}}(x^{o}(t^{-}_{s}),u^{o}(t^{-}_{s}))+T\Phi^{(t^{-}_{s},t^{1})}_{f_{q_{0}}}[f_{q_{0}}(x^{o}(t^{1}),u_{1})\nnum\\&&\hspace{-.3cm}-f_{q_{0}}(x^{o}(t^{1}),u^{o}(t^{1}))]\oplus\frac{d t_{s}(\epsilon)}{d \epsilon}|_{\epsilon=0}\frac{\partial}{\partial t_{s}}\in T_{(x^{o}(t_{s}),t_{s})}\mathcal{S}.\EN
Therefore by  (\ref{kir11}) we have
\EQ &&\hspace{-.7cm}\langle dN_{(x^{o}(t^{-}_{s}),t_{s})},\frac{d \Phi_{\pi, f_{q_{0}}}^{(t^{-}_{s}(\epsilon),t^{1}),x} }{d\epsilon}|_{\epsilon=0}\oplus\frac{d t_{s}(\epsilon)}{d \epsilon}|_{\epsilon=0}\frac{\partial}{\partial t_{s}}\rangle=\nnum\\&&\hspace{-.7cm}\langle dN_{x^{o}(t^{-}_{s})}, \frac{d \Phi_{\pi, f_{q_{0}}}^{(t^{-}_{s}(\epsilon),t^{1}),x} }{d\epsilon}|_{\epsilon=0}\rangle+\frac{d t_{s}(\epsilon)}{d \epsilon}|_{\epsilon=0} \langle dN_{t_{s}},\frac{\partial}{\partial t}\rangle=0,\nnum\\\EN
and finally in case $(i)$ we have
\EQ\label{las}\frac{d t_{s}(\epsilon)}{d \epsilon}|_{\epsilon=0}&\hspace{-.1cm}=&\hspace{-.1cm}-\Big(\langle dN_{x^{o}(t^{-}_{s})},f_{q_{0}}(x^{o}(t_{s}),u^{o}(t_{s}))\rangle+\langle dN_{t_{s}},\frac{\partial}{\partial t}\rangle\Big)^{-1}\nnum\\&&\hspace{-.1cm}\times\Big\langle dN_{x^{o}(t^{-}_{s})},T\Phi^{(t^{-}_{s},t^{1})}_{f_{q_{0}}}[f_{q_{0}}(x^{o}(t^{1}),u_{1})-f_{q_{0}}(x^{o}(t^{1}),u^{o}(t^{1}))]\Big\rangle,\nnum\\\EN
and in case $(ii)$
\EQ\label{las1}\frac{d t_{s}(\epsilon)}{d \epsilon}|_{\epsilon=0}&\hspace{-.1cm}=&\hspace{-.1cm}\Big(\langle dN_{x^{o}(t^{-}_{s})},f_{q_{0}}(x^{o}(t_{s}),u^{o}(t_{s}))\rangle+\langle dN_{t_{s}},\frac{\partial}{\partial t}\rangle\Big)^{-1}\nnum\\&&\hspace{-.1cm}\times\Big\langle dN_{x^{o}(t^{-}_{s})},T\Phi^{(t^{-}_{s},t^{1})}_{f_{q_{0}}}[f_{q_{0}}(x^{o}(t^{1}),u_{1})-f_{q_{0}}(x^{o}(t^{1}),u^{o}(t^{1}))]\Big\rangle,\nnum\\\EN
where $\mu$ in (C.14) is given by
\EQ\label{mu2}&& \hspace{-.8cm}\mu=\Big\langle dh(x^{o}(t_{f})),T\Phi^{(t_{f},t_{s})}_{f_{q_{1}}}[f_{q_{1}}(x^{o}(t_{s}),u^{o}(t_{s}))-T\zeta\big(f_{q_{0}}(x^{o}(t^{-}_{s}),u^{o}(t^{-}_{s}))\big)]\Big\rangle \nnum\\&&\hspace{-.5cm}\times\Big( \langle dN_{x^{o}(t^{-}_{s})},f_{q_{0}}(x^{o}(t^{-}_{s}),u^{o}(t^{-}_{s}))\rangle+\langle dN_{t_{s}},\frac{\partial}{\partial t}\rangle\Big)^{-1}.\nnum\\ \EN
Following the steps of the proof of Theorem 4.5 we have
\EQ &&\hspace{-.5cm}p^{o}(t^{-}_{s})=T^{*}\zeta(p^{o}(t_{s}))+\mu dN_{x^{o}(t^{-}_{s})},\nnum\\&& \hspace{-.5cm} p^{o}(t^{-}_{s})\in T^{*}_{x^{o}(t^{-}_{s})}\mathcal{M},\quad p^{o}(t_{s})\in T^{*}_{x^{o}(t_{s})}\mathcal{M},\nnum\\&&\hspace{-.5cm} x^{o}(t_{s})=\zeta(x^{o}(t^{-}_{s})),\EN
where 
\EQ \label{lamlamlam}p^{o}(t):=T^{*}\Phi^{(t^{-}_{s},t)}_{f_{q_{0}}}\circ T^{*}\zeta\circ T^{*}\Phi^{(t_{f},t_{s})}_{f_{q_{1}}}dh(x^{o}(t_{f}))\nnum\\+\mu T^{*}\Phi^{(t^{-}_{s},t)}_{f_{q_{0}}} dv(x^{o}(t^{-}_{s}),t_{s}),\quad t\in [t_{0},t_{s}),\EN
and
\EQ p^{o}(t):=T^{*}\Phi^{(t_{f},t)}_{f_{q_{1}}}dh(x^{o}(t_{f})),\quad t\in [t_{s},t_{f}].\EN

Step 3 in the proof of Theorem 4.5 also holds for time varying switching cases. 
To analyze the possible discontinuity of the Hamiltonian we employ the same method as that used in step 4 of the proof of Theorem 4.5. Therefore
\EQ \label{lam3}&&\hspace{-.6cm}\langle T^{*}\Phi^{(t_{f},t_{s})}_{f_{q_{1}}}dh(x^{o}(t_{f})), [f_{q_{1}}(x^{o}(t_{s}),u^{o}(t_{s}))-T\zeta(f_{q_{0}}(x^{o}(t^{-}_{s}),u^{o}(t^{-}_{s})))]\rangle =\nnum\\&&\hspace{-.6cm}\Big\langle T^{*}\Phi^{(t_{f},t_{s})}_{f_{q_{1}}}dh(x^{o}(t_{f})),\big[ \langle dN_{x^{o}(t^{-}_{s})},f_{q_{0}}(x^{o}(t^{-}_{s}),u^{o}(t^{-}_{s}))\rangle\nnum\\&&\hspace{-.6cm}+\langle dN_{t_{s}},\frac{\partial}{\partial t}\rangle\big]^{-1}\times\Big( \langle dN_{x^{o}(t^{-}_{s})},f_{q_{0}}(x^{o}(t^{-}_{s}),u^{o}(t^{-}_{s}))\rangle+\langle dN_{t_{s}},\frac{\partial}{\partial t}\rangle\Big)\nnum\\&&\hspace{-.6cm}\times[f_{q_{1}}(x^{o}(t_{s}),u^{o}(t_{s}))-T\zeta (f_{q_{0}}(x^{o}(t^{-}_{s}),u^{o}(t^{-}_{s})))]\Big\rangle,\nnum\\\EN
which implies
\EQ &&H_{q_{1}}(x^{o}(t_{s}),p^{o}(t_{s}),u^{o}(t_{s}))=\langle T^{*}\Phi^{(t_{f},t_{s})}_{f_{q_{1}}}dh(x^{o}(t_{f})),f_{q_{1}}(x^{o}(t_{s}),u^{o}(t_{s}))\rangle\hspace{.5cm}\mbox{by C.1 in the main paper}\nnum\\&&=\langle T^{*}\Phi^{(t_{f},t_{s})}_{f_{q_{1}}}dh(x^{o}(t_{f})),T\zeta\big(f_{q_{0}}(x^{o}(t^{-}_{s}),u^{o}(t^{-}_{s}))\big)\rangle\nnum\\&&+\Big\langle T^{*}\Phi^{(t_{f},t_{s})}_{f_{q_{1}}}dh(x^{o}(t_{f})),\nnum\\&&\big\{\big(\langle dN_{x^{o}(t^{-}_{s})},f_{q_{0}}(x^{o}(t_{s}),u^{o}(t_{s}))\rangle+\langle dN_{t_{s}},\frac{\partial}{\partial t}\rangle\big)^{-1}\nnum\\&&\times[f_{q_{1}}(x^{o}(t_{s}),u^{o}(t_{s}))-T\zeta\big(f_{q_{0}}(x^{o}(t^{-}_{s}),u^{o}(t^{-}_{s}))\big)]\big\}\Big\rangle\nnum\\&&\times\big(\langle dN_{x^{o}(t^{-}_{s})},f_{q_{0}}(x^{o}(t_{s}),u^{o}(t_{s}))\rangle+\langle dN_{t_{s}},\frac{\partial}{\partial t}\rangle\big)\hspace{.5cm}\mbox{by \ref{lam3}}\nnum\\&&=\langle T^{*}\zeta \circ T^{*}\Phi^{(t_{f},t_{s})}_{f_{q_{1}}}dh(x^{o}(t_{f})),f_{q_{0}}(x^{o}(t^{-}_{s}),u^{o}(t^{-}_{s}))\rangle \nnum\\&&+\mu\langle dN_{x^{o}(t^{-}_{s})},f_{q_{0}}(x^{o}(t_{s}),u^{o}(t_{s}))\rangle+\mu \langle dN_{t_{s}},\frac{\partial}{\partial t}\rangle\hspace{.5cm}\mbox{by \ref{mu2}},\nnum\\\EN
and finally we have\\

\EQ\label{kos1}\langle p^{o}(t_{s})),f_{q_{1}}(x^{o}(t_{s}),u^{o}(t_{s}))\rangle=\langle p^{o}(t^{-}_{s})),f_{q_{0}}(x^{o}(t^{-}_{s}),u^{o}(t^{-}_{s}))\rangle+\mu\langle dN_{t_{s}},\frac{\partial}{\partial t}\rangle,\nnum\\ \EN
or equivalently
\EQ\label{kos2}\hspace{0cm} H_{q_{0}}(x^{o}(t^{-}_{s}),p^{o}(t^{-}_{s}),u^{o}(t^{-}_{s}))=H_{q_{1}}(x^{o}(t_{s}),p^{o}(t_{s}),u^{o}(t_{s}))-\mu \langle dN_{t_{s}},\frac{\partial}{\partial t}\rangle,\nnum\\ \EN
which completes the proof.\end{proof}

\section{Proof of Theorem \ref{tt22}}
\label{s6}
\begin{proof}
The proof closely parallels the proof of Theorem \ref{tt2} where 
\EQ \label{lamlamlam11}p^{o}(t):=T^{*}\Phi^{(t^{-}_{s},t)}_{f_{q_{0}}}\circ T^{*}\zeta\circ T^{*}\Phi^{(t_{f},t_{s})}_{f_{q_{1}}}dh(x^{o}(t_{f}))\nnum\\+\mu T^{*}\Phi^{(t^{-}_{s},t)}_{f_{q_{0}}} dN_{x^{o}(t^{-}_{s})},\quad t\in [t_{0},t_{s}),\EN
and
\EQ p^{o}(t):=T^{*}\Phi^{(t_{f},t)}_{f_{q_{1}}}dh(x^{o}(t_{f})),\quad t\in [t_{s},t_{f}],\EN
where
\EQ\label{mu3}&& \hspace{-.2cm}\mu=\langle dh(x^{o}(t_{f})),T\Phi^{(t_{f},t_{s})}_{f_{q_{1}}}[f_{q_{1}}(x^{o}(t_{s}),u^{o}(t_{s}))-T\zeta(f_{q_{0}}(x^{o}(t^{-}_{s}),u^{o}(t^{-}_{s})))\nnum\\&&\hspace{0cm}-D_{t}\zeta(x^{o}(t_{s}),t_{s})]\rangle \times\big( \langle dN_{x^{o}(t^{-}_{s})},f_{q_{0}}(x^{o}(t^{-}_{s}),u^{o}(t^{-}_{s}))\rangle+\langle dN_{t_{s}},\frac{\partial}{\partial t}\rangle\big)^{-1}.\nnum\\  \EN 
It should be noted that $D_{t}\zeta(x^{o}(t_{s}),t_{s})(\frac{\partial}{\partial t})\in T\mathcal{M}$ and for  simplicity we drop $\frac{\partial}{\partial t}$. To prove the Hamiltonian discontinuity we have
\EQ \label{lam4}&&\hspace{0cm}\Big\langle T^{*}\Phi^{(t_{f},t_{s})}_{f_{q_{1}}}dh(x^{o}(t_{f})), \Big\{f_{q_{1}}(x^{o}(t_{s}),u^{o}(t_{s}))\nnum\\&&-T\zeta(f_{q_{0}}(x^{o}(t^{-}_{s}),u^{o}(t^{-}_{s})))-D_{t}\zeta(x^{o}(t_{s}),t_{s})\Big\}\Big\rangle \nnum\\&&\hspace{0cm}=\Big\langle T^{*}\Phi^{(t_{f},t_{s})}_{f_{q_{1}}}dh(x^{o}(t_{f})),\big( \langle dv(x^{o}(t^{-}_{s}),t_{s}),f_{q_{0}}(x^{o}(t^{-}_{s}),u^{o}(t^{-}_{s}))\rangle\nnum\\&&\hspace{0cm}+\langle dN_{t_{s}},\frac{\partial}{\partial t}\rangle\big)^{-1}\times\big( \langle dN_{x^{o}(t^{-}_{s})},f_{q_{0}}(x^{o}(t^{-}_{s}),u^{o}(t^{-}_{s}))\rangle+\langle dN_{t_{s}},\frac{\partial}{\partial t}\rangle\big)\nnum\\&&\hspace{0cm}\times[f_{q_{1}}(x^{o}(t_{s}),u^{o}(t_{s}))-T\zeta(f_{q_{0}}(x^{o}(t^{-}_{s}),u^{o}(t^{-}_{s})))-D_{t}\zeta(x^{o}(t_{s}),t_{s})]\Big\rangle,\nnum\\\EN
which implies
\EQ &&H_{q_{1}}(x^{o}(t_{s}),p^{o}(t_{s}),u^{o}(t_{s}))=\big\langle T^{*}\Phi^{(t_{f},t_{s})}_{f_{q_{1}}}dh(x^{o}(t_{f})),f_{q_{1}}(x^{o}(t_{s}),u^{o}(t_{s}))\big\rangle\hspace{.5cm}\mbox{by C.1 in the main paper}\nnum\\&&=\Big\langle T^{*}\Phi^{(t_{f},t_{s})}_{f_{q_{1}}}dh(x^{o}(t_{f})),T\zeta\big(f_{q_{0}}(x^{o}(t^{-}_{s}),u^{o}(t^{-}_{s}))\big)\Big\rangle\nnum\\&&+
\langle T^{*}\Phi^{(t_{f},t_{s})}_{f_{q_{1}}}dh(x^{o}(t_{f})),D_{t}\zeta(x^{o}(t_{s}),t_{s})\rangle\nnum\\&&+\Big\langle T^{*}\Phi^{(t_{f},t_{s})}_{f_{q_{1}}}dh(x^{o}(t_{f})),\Big(\langle dN_{x^{o}(t^{-}_{s})},f_{q_{0}}(x^{o}(t_{s}),u^{o}(t_{s}))\rangle+\langle dN_{t_{s}},\frac{\partial}{\partial t}\rangle\Big)^{-1}\nnum\\&&\times[f_{q_{1}}(x^{o}(t_{s}),u^{o}(t_{s}))-T\zeta (f_{q_{0}}(x^{o}(t^{-}_{s}),u^{o}(t^{-}_{s})))]\Big\rangle\nnum\\&&\times\Big(\langle dN_{x^{o}(t^{-}_{s})},f_{q_{0}}(x^{o}(t_{s}),u^{o}(t_{s}))\rangle+\langle dN_{t_{s}},\frac{\partial}{\partial t}\rangle\Big)\hspace{.5cm}\mbox{by \ref{lam4}}\nnum\\&&=\langle T^{*}\zeta \circ T^{*}\Phi^{(t_{f},t_{s})}_{f_{q_{1}}}dh(x^{o}(t_{f})),f_{q_{0}}(x^{o}(t^{-}_{s}),u^{o}(t^{-}_{s}))\rangle \nnum\\&&+\mu\langle dN_{x^{o}(t^{-}_{s})},f_{q_{0}}(x^{o}(t^{-}_{s}),u^{o}(t^{-}_{s}))\rangle \nnum\\&&+D^{*}_{t}\zeta(T^{*}\Phi^{(t_{f},t_{s})}_{f_{q_{1}}}dh(x^{o}(t_{f})))+\mu \langle dN_{t_{s}},\frac{\partial}{\partial t}\rangle\hspace{.5cm}\mbox{by \ref{mu3}}\nnum\\&&=H_{q_{0}}(x^{o}(t^{-}_{s}),p^{o}(t^{-}_{s}),u^{o}(t^{-}_{s}))+D^{*}_{t}\zeta(T^{*}\Phi^{(t_{f},t_{s})}_{f_{q_{1}}}dh(x^{o}(t_{f})))+\mu \langle dN_{t_{s}},\frac{\partial}{\partial t}\rangle,\nnum\\\EN
where by the definition of pullbacks (see \cite{Lee2})  
\EQ\hspace{.5cm}\langle T^{*}\Phi^{(t_{f},t_{s})}_{f_{q_{1}}}dh(x^{o}(t_{f})),D_{t}\zeta(x^{o}(t_{s}),t_{s})\rangle= D^{*}_{t}\zeta(T^{*}\Phi^{(t_{f},t_{s})}_{f_{q_{1}}}dh(x^{o}(t_{f})))\in \mathds{R},\EN
and $p^{o}(t_{s})=T^{*}\Phi^{(t_{f},t_{s})}_{f_{q_{1}}}dh(x^{o}(t_{f}))$. The remaining of the proof is similar to that of  (\ref{kos1}) and (\ref{kos2}).\end{proof}

 \bibliographystyle{siam.bst}
\bibliography{HSCC}
 
\end{document}